\tikzset{
   %Define style for tiles
    tile/.style={
           circle,
           draw=black},
}
\newcommand{\uhr}{\upharpoonright}
\newtheorem{defn}{Definition}
\newtheorem{thm}{Theorem}
\newtheorem{prop}{Proposition}
\newtheorem{lemma}{Lemma}
\newcommand{\bdefn}{\begin{defn}}
\newcommand{\edefn}{\end{defn}}
\newcommand{\bthm}{\begin{thm}}
\newcommand{\ethm}{\end{thm}}
\newcommand{\bitem}{\begin{itemize}}
\newcommand{\eitem}{\end{itemize}}
\newcommand{\bpf}{\begin{proof}}
\newcommand{\epf}{\end{proof}}
\newcommand{\on}{\operatorname}
\newcommand{\poly}{\on{poly}}
\newcommand{\Nat}{\mathbb{N}}
\newcommand{\Z}{\mathbb{Z}}
\title{Seas of squares with sizes from a $\Pi^0_1$ set}
\author[L.B.~Westrick]{Linda Brown Westrick} 
\address{School of Mathematics and Statistics, Victoria University of Wellington, Wellington, New Zealand}
 \email{westrick@msor.vuw.ac.nz}
\thanks{The author was supported by Noam Greenberg's Rutherford Discovery Fellowship as a postdoctoral fellow.}
\begin{document}

\begin{abstract}
For each $\Pi^0_1$ $S\subseteq \Nat$, let the $S$-square shift be the 
two-dimensional subshift 
on the alphabet $\{0,1\}$ whose elements consist of squares of 1s of various 
sizes on a background of 0s, where the side length of each square is in $S$.  
Similarly, let the distinct-square shift consist of seas of squares such 
that no two finite squares have the same size.
Extending the self-similar Turing machine tiling construction of \cite{drs}, we 
show that if $X$ is an $S$-square shift or any effectively closed subshift 
of the distinct square shift, then $X$ is sofic.
\end{abstract}

\maketitle

The class of multidimensional sofic shifts contains many accessible examples of 
a geometrical nature. %\cite{something}.  
However, since the sofic shifts 
properly contain the shifts of finite type (SFTs), they also include shifts
 whose elements encode and make use of arbitrary Turing computations, as
 pioneered in \cite{wang, berger, robinson} and applied in \cite{as, dls, 
drs, hoch, hm, simpson_med} and more.  For definitions, see  
Section \ref{sec:prelim}.

Despite this flexibility, the sofic shifts are still a proper subclass of the
 effectively closed shifts, as evidenced by the mirror shift 
and examples in \cite{pavlov, ru, dls}\footnote{In \cite{ru} 
the existence of effective $\Z^d$ subshifts with only complex patterns was 
established; in \cite{dls} it was shown that a subshift with only complex 
patterns cannot be sofic.}.
  It is informative to 
consider what goes wrong in the following ``construction'' realizing an 
arbitrary effectively closed shift $A$ as a sofic shift.  Given an element $X$ 
which may or may not be in $A$,
\begin{enumerate}
\item Superimpose on $X$ the Turing machine computation which enumerates the 
forbidden patterns of $A$.
\item Whenever a forbidden pattern is enumerated, the computation checks 
whether this pattern has been used in $X$.
\item If the pattern has been used, the computation halts, keeping $X$ out of 
the sofic shift.
\end{enumerate}
The problem is with Step 2.  A Turing computation has only limited access to 
the bits of $X$ on which it is superimposed.  The exact scope of the limitation 
has not been described, but all examples known to the author 
of effectively closed shifts which 
are not sofic were obtained by in some sense 
allowing elements to pack too much important information into a small area. 
On the other hand, 
constructions like the above have been used in \cite{hm, hoch, drs, as}
in the special circumstance where each element of the shift 
is known to have constant columns 
(and in \cite{hm, hoch}, additional geometric properties).  In \cite{hm}, 
such shifts played a role in characterizing the entropies of the 
multidimensional SFTs.  In \cite{drs} and independently \cite{as}, it was 
shown that every effectively closed $\Z^2$ shift consisting of elements 
with constant columns is sofic. 
In this paper, we demonstrate the soficness of a class of shifts with a 
different geometric restriction.

\begin{defn}
For any $S\subseteq \mathbb{N}$, the two-dimensional \emph{$S$-square subshift} 
on the alphabet $\{0,1\}$ is the shift whose elements consist of squares 
of $1$s of various sizes on a background of $0$s, where the side length of each 
square is in $S$. 
\end{defn}

For the sake of being specific let us say that the squares can touch 
diagonally, but this detail does not matter.  Of course, infinite/degenerate 
squares, such as a whole quarter- or half-plane being filled with 1s, are 
unavoidably included when squares of arbitrarily large size are.
The main theorem is

\begin{thm}\label{thm:1}
For any $\Pi^0_1$ set $S$, the $S$-square subshift is sofic.
\end{thm}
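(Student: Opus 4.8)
The plan is to exhibit the $S$-square subshift as a $1$-block factor of a two-dimensional SFT $Y$, obtained by stacking three essentially independent layers on top of the $\{0,1\}$-configuration $X$: a \emph{geometry layer} that forces $X$ to be a sea of squares and records each square's side length, a \emph{self-similar scaffold} of macro-tiles imported from \cite{drs}, and a \emph{verification layer} in which the macro-tiles run an enumeration of $\Nat\setminus S$ and forbid any configuration displaying a square whose side length gets enumerated; the factor map $\pi$ just erases everything but $X$.

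For the geometry layer I would first arrange, by purely local rules, that the $1$-set of $X$ is a disjoint union of axis-parallel squares (finite, half-infinite, or the whole plane): a finite list of admissible $3\times 3$ windows around each $1$-cell forces every connected block of $1$s to be a rectangle, and overlaying its main diagonal --- a $1$-cell carries a diagonal mark iff it is a top-left corner or its up-left neighbour carries one, and every bottom-right corner is required to carry a diagonal mark --- forces width to equal height, since in a $w\times h$ rectangle the diagonal from the top-left corner reaches the bottom-right corner exactly when $w=h$. A counter running along an edge then delivers the common side length $n$ in binary to each corner cell. All of this is of finite type, and since distinct squares are pairwise edge-disjoint these markers never collide. (When $S$ is finite the side length is locally bounded and $Y$ is already an SFT, so assume $S$ infinite.)

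Over this I would install the self-similar hierarchy of \cite{drs}: for a rapidly growing sequence $N_1<N_2<\cdots$, a level-$k$ macro-tile is an $N_k\times N_k$ block assembled from level-$(k-1)$ macro-tiles, with communication wires and a computation zone running a fixed universal program that produces the rules of the next level and carries out an auxiliary task. Here the auxiliary task is: each level-$k$ macro-tile gathers from its children the set $A_k\subseteq\{1,\dots,N_k\}$ of side lengths $\le N_k$ of finite squares meeting its domain (reading these off the corner counters and routing them up the hierarchy, taking unions along the way), runs the first $g(N_k)$ steps of a fixed enumeration of $\Nat\setminus S$ --- $g$ unbounded, chosen small enough that the computation fits --- and triggers an error, propagating to a forbidden global pattern, whenever the enumerated part of $\Nat\setminus S$ meets $A_k$. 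Correctness has two directions. If $X$ is a legal sea of squares with every finite side length in $S$, then $\Nat\setminus S$ meets no $A_k$, a self-similar scaffold can be laid over $X$ with the verification and counter data then forced and consistent, and hence $X\in\pi(Y)$. Conversely, a point of $\pi(Y)$ is a sea of (possibly degenerate) squares by the geometry layer; were some finite square to have side $n\notin S$, then $n$ would enter $\Nat\setminus S$ at a finite stage $s$, and choosing $k$ large enough that $N_k\ge n$, the square meets at most four level-$k$ macro-tiles, and $g(N_k)\ge s$, one of those macro-tiles would have $n\in A_k$ with $n$ already enumerated and would err --- a contradiction. So $\pi(Y)$ is exactly the $S$-square subshift, which is therefore sofic.

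The hard part, as usual with this method, is the bookkeeping inside the verification layer: making macro-tiles learn the side lengths of the squares that meet them, store $A_k$, and run the enumeration, all within the scant communication bandwidth and computation-zone area the self-similar construction affords, while never over-constraining a legal configuration. In the constant-column setting of \cite{drs, as} the data a macro-tile had to absorb was essentially one-dimensional; here it is genuinely two-dimensional, so the work lies in tuning the growth rate of the $N_k$, the shapes of the communication and computation zones, and the routing that funnels the (at most $N_k$) distinct side lengths upward and broadcasts errors downward, so that the capacity always suffices for a legal sea of squares yet every illegal one is eventually refuted. Minor nuisances are squares that straddle macro-tile boundaries at the first level where they become small (report each square through the macro-tile holding its top-left corner), and verifying that half- and quarter-plane squares --- legitimate, meeting infinitely many macro-tiles at every level and so never examined --- do no harm.
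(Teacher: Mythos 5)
Your high-level plan is exactly the one the paper follows: superimpose a local ``sea-of-squares'' SFT on the $\{0,1\}$ configuration, layer the \cite{drs} self-similar macro-tile hierarchy over it, and have macro-tiles enumerate $\Nat\setminus S$ and kill themselves on sight of a forbidden size. The comparison therefore reduces to the part you yourself flag as ``the hard part'': how macro-tiles actually learn, store, and verify the side lengths. As written, that part has genuine gaps.

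\emph{Delivering sizes to macro-tiles.} You propose ``a counter running along an edge then delivers the common side length $n$ in binary to each corner cell.'' A corner cell is a single pixel and carries $O(1)$ bits of state, so it cannot hold $\log n$ bits; and if the counter is instead stored as a strip of pixels, a macro-tile at the level where it first fully contains the square has side length that can be far larger than $n$, and has no mechanism to read an $O(\log n)$-pixel strip sitting at an arbitrary offset inside it. The paper does not try to have the pixels carry the size at all. Instead, a child macro-tile that contains a corner pixel knows its own coordinates inside its parent, converts the corner's position into ``deep coordinates'' relative to the parent, and passes these to its siblings along the two arms of the corner; when two corner-bearing siblings match up, each of them \emph{computes} the side length from the two deep coordinates. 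A separate, slightly different ``secondary'' corner message channel handles squares straddling a four-way boundary of macro-tiles at every level, which your ``report through the top-left-corner macro-tile'' rule alone does not resolve, since that macro-tile still needs to learn $n$ from a non-sibling.

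\emph{Routing the size list upward and verifying it.} You describe children ``routing'' $A_k$ up the hierarchy and ``funnels the (at most $N_k$) distinct side lengths upward.'' Two problems. First, the count is $\Theta(M_k^{2/3})$ where $M_k$ is the macro-tile's \emph{pixel} side (not $N_k$, the number of children per side); this must be made to fit on a tape of length $O(N_{k-1})$, which already constrains the growth of $N_i$ to something like $2^{2^{2^i}}$. Second, and more fundamentally, there is no ``parent process'' to funnel to: in the \cite{drs} framework the parent's tape is realized cell-by-cell in its children, and the consistency check is that each child must be able to verify the parent recorded its sizes. In \cite{drs} this was done by giving every child a full copy of the parent's $O(\log L)$-bit state. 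Here the parent's size list has $\Theta(L_{k}^{2/3})$ bits, which exceeds a single child's entire tape, so that trick is unavailable. The paper's principal new idea --- the multiscale ``plaid'' --- is a nondeterministically chosen distributed routing among the $N_{k-1}^2$ siblings, with counters to block hallucinated sizes, such that each child sees only the $O(2^k L_k^{2/3})$ sizes it needs and every size it needs is connected by a path to the children literally sitting on the parent's tape. Your sketch has no substitute for this.

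\emph{Runtime.} A further constraint you do not mention: a level-$k$ macro-tile's validity check must finish in $\ll N_{k-1}$ steps on a tape of length $\Theta(N_{k-1})$ holding $\Theta(N_{k-1}^{2/3})$ data items, so single-tape quadratic-time algorithms are ruled out. The paper switches to a multi-tape model, superimposes the four macro-colors on separate tapes, and insists on sorted size lists so that all checks run in near-linear time; the recursion theorem is upgraded to a runtime-preserving version to make this compatible with self-reference.

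In short: same skeleton, but the specific mechanisms you offer at the crux --- the per-square counter and the unelaborated ``routing upward'' --- either fail outright or leave the central difficulty unsolved. The paper's deep-coordinate corner messaging, the plaid distribution scheme, and the multi-tape/runtime bookkeeping are exactly the content that fills that gap.
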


The proof expands on the framework developed in \cite{drs}.  In their 
construction, arrays of Turing machines operating at multiple scales work 
together to ``read'' the common row, compute the forbidden words, and 
kill any elements whose common row contains a forbidden word.  The ``reading'' 
process makes strong use of the fact that each bit of the common row is 
repeated infinitely often along an entire column, so the machines have many 
opportunities to work together to access it.

In our construction, a similar array of Turing machines ``measures'' the sizes 
of the squares in a potential element, computes the forbidden sizes, 
and kills the element if it sees a square of a forbidden size.  However, an 
extra challenge is that each machine must measure and store the size of every 
single square in its vicinity, since sizes are not necessarily repeated and 
thus no machine can compensate for another machine overlooking a square.  The 
higher density of information creates some new difficulties which are 
summarized at the start of Section \ref{sec:informal}.

Note that Theorem \ref{thm:1} 
could not be improved to the statement that every effective 
sub-subshift of the $\mathbb{N}$-square shift is sofic.  For example, 
take any effectively closed non-sofic shift $A$ on $\{0,1\}$, 
and embed it into a subshift of 
the $\{1\}$-square shift by taking each element of $A$
and inserting a row and column of $0$s between each original row and 
column in order to separate them.  To avoid this counterexample, we  
consider subshifts consisting of seas of squares in which no size is 
repeated.  
\begin{defn}
The \emph{distinct-square subshift} is the shift whose elements consist 
of squares of $1$s of various sizes on a background of $0$s, where 
the side length of each finite square is distinct.
\end{defn}
With that restriction, similar methods yield the following.
\begin{thm}\label{thm:2}
Every effectively closed subshift of the distinct-square shift is sofic.
\end{thm}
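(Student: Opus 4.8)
The plan is to adapt the machinery behind Theorem~\ref{thm:1} so that it not only measures square sizes and checks them against the forbidden set, but also enforces the global constraint that no two finite squares share a size. Since Theorem~\ref{thm:1} already handles the ``$\Pi^0_1$ set of allowed sizes'' part via an array of Turing machines operating at nested scales, I would keep that infrastructure intact: each machine in the hierarchy measures every square in its purview and records the size (a binary string) in its local memory, passing summaries upward through the scales. For Theorem~\ref{thm:2} the relevant fact is that an \emph{effectively closed} subshift $X$ of the distinct-square shift is specified by a $\Pi^0_1$ set of forbidden finite patterns, and these patterns are patterns of the distinct-square shift. So the construction must do three things simultaneously: (i) run the enumeration of $X$'s forbidden patterns and kill any element in which one appears, exactly as in the naive construction from the introduction; (ii) certify that the underlying configuration really is a sea of squares of pairwise distinct sizes; and (iii) since arbitrarily large squares (hence degenerate half/quarter-planes) are allowed, handle those limiting cases gracefully.

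The heart of the matter is step (ii): detecting a size collision between two squares that may be astronomically far apart. My approach is to exploit the self-similar scale structure already present. At each scale $k$, a machine governs a block of side length roughly $N_k$ (the $k$-th scale parameter), measures all squares that fit inside its block, and --- crucially --- also receives, from the four neighboring scale-$k$ machines and from its parent scale-$(k+1)$ machine, a \emph{digest} of the square sizes seen elsewhere. The digest cannot list all sizes (too much information for the bandwidth, which is exactly the obstruction flagged in the introduction), but it can be organized as a characteristic-function bitstring indexed by size: bit $s$ is $1$ iff a finite square of side length $s$ has been observed in the region handled so far. Because the sizes are \emph{distinct}, a square of side length $s$ occupies area $s^2$, and in a block of side $N_k$ there are at most $N_k^2/s^2$ squares of any size, but more to the point the total number of distinct sizes realized inside the block is $O(N_k)$ (the side lengths are distinct positive integers whose squares sum to at most $N_k^2$, so there are $O(N_k^{2/3})$ of them, comfortably sub-linear); hence the digest is short enough to propagate. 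A collision is then detected when a machine tries to set a bit that an incoming digest already has set. The scale hierarchy guarantees that any two squares, no matter how far apart, are both ``seen'' by a common ancestor machine at some finite scale, so every collision is eventually caught; and a configuration survives iff it genuinely is a distinct-square sea avoiding $X$'s forbidden patterns.

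In more detail, the steps in order: (1) recall/restate the self-similar Turing-machine tiling of \cite{drs} and the scale parameters $N_k$, together with the modification from the proof of Theorem~\ref{thm:1} in which each machine measures and stores local square sizes; (2) define the digest datatype and the merge operation (bitwise OR of characteristic strings, with collision = attempting to OR in a bit already present in the local contribution), and verify the bandwidth bound $|\text{digest}| = o(N_k)$ using the sum-of-distinct-squares estimate so that digests fit in the available communication channels between adjacent and parent tiles; (3) wire the digest propagation into the tiling rules so that each scale-$(k+1)$ tile computes its digest as the merge of its four children's digests plus the sizes of squares spanning child boundaries, and rejects if any merge signals a collision; (4) simultaneously superimpose the enumeration of the $\Pi^0_1$ forbidden-pattern set of $X$ and reject if a forbidden pattern is realized --- here the ``reading'' of local pattern data works just as in \cite{drs} because the relevant data is finite and bounded; (5) handle infinite/degenerate squares by noting that a half- or quarter-plane of $1$s is consistent with the distinct-square shift (it contributes no \emph{finite} square size), so the digest machinery simply records nothing for it, and one checks the tiling rules do not spuriously reject such configurations; (6) conclude that the resulting SFT, under the letter-projection that forgets all the auxiliary tiling data, has image exactly $X$, so $X$ is sofic.

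The main obstacle I anticipate is step (2)--(3): making the bandwidth accounting airtight. The digest must be short relative to the geometric ``width'' a tile has available to pass information to its neighbors, and this is precisely the pinch point where analogous constructions fail for general effectively closed shifts. The distinctness hypothesis is what saves us --- it forces the number of realized sizes in any region to grow strictly slower than the region's linear dimension --- but one must be careful that (a) the encoding of each size in the digest is itself compact (a size up to $N_k$ needs $O(\log N_k)$ bits, and there are $O(N_k^{2/3})$ of them, for $O(N_k^{2/3}\log N_k)$ total, still $o(N_k)$), (b) the merge operations at each scale can be carried out within the time/space budget the \cite{drs} framework allots to scale-$k$ computations, and (c) a square straddling the boundary between two child tiles is measured and reported by exactly one of them (a tie-breaking convention) so its size is neither double-counted (which would falsely signal a collision) nor dropped (which would let a real collision slip through). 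Once these bookkeeping points are nailed down, the rest follows the template of Theorem~\ref{thm:1} closely.
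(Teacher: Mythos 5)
Your proposal correctly identifies the broad strategy (reuse the Theorem~1 scale hierarchy, track square sizes, enumerate the $\Pi^0_1$ forbidden set, and add a mechanism to enforce distinctness), and your density estimate $O(N_k^{2/3})$ distinct sizes per block is right. However, the central mechanism you propose for detecting size collisions --- a bottom-up ``digest merge'' where a collision is flagged when two incoming digests both contain size $s$ --- does not fit the tile framework you are building on, and there are two further gaps.

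First, in the expanding tileset construction the parent's parameter tape is \emph{nondeterministically guessed} and the children merely verify, via distributed message passing, that the sizes they care about appear on it; there is no deterministic fan-in of digests from children to parent. If two children each hold a square of side $n$, a parent tape listing $n$ once reassures both of them, and nothing in the Theorem~1 machinery, nor in a bitwise-OR/sorted-merge of digests, would register a conflict. (Also note your two descriptions of the digest are incompatible: as a characteristic bitstring indexed by size it would need $\Theta(L_i)$ bits, far exceeding the $O(L_i^{2/3}\log L_i)$ budget, while as a compact list, OR-merging is not the right operation.) The fix the paper uses is different and more local: annotate each size on the parameter tape with the deep coordinates of the lower-left corner of the unique square of that size, and have each child demand reassurance about size \emph{and} location. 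Since the tape can hold only one coordinate pair per size, two children with same-size squares in different places can never both be reassured, and the tiling dies. Second, your step (4) glosses over what ``reading local pattern data'' means: a forbidden pattern of an effectively closed subshift is a finite picture of $0$s and $1$s, and a tape listing only sizes cannot determine whether that picture occurs. The macrotile must carry a complete geometric description of its responsibility zone --- the coordinate annotations plus the locations of partial sides --- and then check, for each enumerated forbidden pattern (annotated with a choice of completion of its partial squares, including ``Large''/``Far'' for degenerate cases), whether a unique candidate placement inside the responsibility zone is realized. Third, you do not address the fact that a finite pattern might straddle the four-way boundary of macrotiles at every level; the paper handles this by widening responsibility zones so that boundary children share their parameter tapes with neighbors of a different parent, guaranteeing that every finite region eventually lies inside a single macrotile's zone. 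Without these three ingredients the construction would fail to catch some duplicate sizes, some forbidden patterns, and some boundary-straddling configurations.
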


The $S$-square shifts can be thought of as a two-dimensional generalization of 
the one-dimensional $S$-gap shifts.  See \cite{LM} for the definition and basic 
properties, and \cite{HS} for an analysis of some computational properties of 
$S$-gap shifts for $\Pi^0_1$ sets $S$, including the result that some 
right-r.e. numbers are not obtainable as the entropy of a $\Pi^0_1$ $S$-gap 
shift.  However, as is often the case when passing from one dimension to 
multiple dimensions, few of the nice properties of the $S$-gap shift survive 
the dimension increase.  For example, the entropy of the $S$-gap shift for all 
$S$ is well-understood (cf \cite{LM}); by contrast, the study of 
approximations to the entropy of even the $\{1\}$-square shift, 
better known as the hard square 
shift, continues to the present day \cite{P}.  To the 
author's knowledge, nothing is known about the the entropies of the $S$-square 
shifts more generally.  The distinct-square shift has entropy zero.

It is an open question whether 
the following result of \cite{coven} remains true in 
multiple dimensions: every one-dimensional sofic shift is a factor of an SFT of 
the same entropy.  See \cite{desai} for a partial result towards this question 
and \cite{hm} for some discussion.  No counterexample comes out of this class 
of sofic shifts.  At the end of the paper we indicate how to modify our 
construction to produce an SFT with the same entropy as the $S$-square shift 
or distinct-square shift it factors onto.

\begin{comment}In related work in the context of tilings, \cite{DLS} considered 
the growth of the quantity $K(X^{F_n}))$ where $K$ is the Kolmogorov complexity, 
$X$ is some element of a subshift, $F_n \subset \Z^2$ is a square of side 
length $n$ centered at the origin, and $X^{F_n}$ is the block of values taken by 
$X$ on that square.  They show that every SFT contains an element $X$ for which 
this information growth is $O(n)$, a result which generalizes immediately to 
sofic shifts.  Complementing this, they also produce an SFT such that each of 
its elements has at least linear information growth.  
\end{comment}

In Section \ref{sec:expanding} we give a summary of the expanding tileset 
construction 
of \cite{drs} on which our construction builds.  In Section \ref{sec:informal} 
we describe the new algorithm run by tiles at all levels 
to produce an SFT which factors onto a given $S$-square shift, proving Theorem 
\ref{thm:1}.
The supporting Section \ref{sec:algorithm} provides pseudocode of this 
algorithm, for reference and also for 
ease of verifying the claimed runtimes.  
A short Section \ref{sec:distinct} describes how to modify the construction 
of previous two sections to apply to effectively closed subshifts of the 
distinct square shift, proving Theorem \ref{thm:2}.
Finally, in Section \ref{sec:entropy} 
we describe how to modify both constructions so that 
they produce an SFT with the 
same entropy as the subshift they factor onto.

The author would like to thank the anonymous referee, 
whose comments have improved the paper's clarity in several places.  
Most beneficially,
they pointed out where insufficient detail was given in a previous version of 
Section \ref{sec:distinct}.

\section{Preliminaries}\label{sec:prelim}

\subsection{Definitions}

Let $G$ be $\mathbb{Z}^d$ for some positive integer $d$.  In this paper 
almost always $d=2$.  Let $\Sigma$ be a finite
alphabet.  If $D\subseteq G$ is finite, a mapping
$\sigma:D\rightarrow \Sigma$ is called a \emph{pattern}.  If $X \in \Sigma^G$
and $\sigma$ is a pattern with domain $D$, we say $\sigma$ \emph{appears in}
$X$ if there is some translation $g \in G$ so that for all $i \in gD$,
$X(i) = \sigma(g^{-1}i)$.  A subset $A \subseteq \Sigma^G$ is a
\emph{subshift} if there is a set of patterns $P$ such that $A$ consists of
exactly the $X \in \Sigma^G$ that do not contain any pattern from $P$.
It is a \emph{shift of finite type (SFT)} if there is a finite such $P$, and
an \emph{effectively closed shift} if there is a computably enumerable such
$P$.  A subshift $A \subseteq \Sigma^G$ is \emph{sofic} if there is SFT
$B \subseteq \Lambda^G$, where $\Lambda$ is a possibly different
alphabet, and a function $\phi:\Lambda \rightarrow \Sigma$, such that
$A$ is the image of $B$ under $\phi$ (with notation abused to apply
$\phi$ to an element of $\Lambda^G$.)  In this situation we say that 
$B$ \emph{factors onto} $A$.
This was not the original
definition of sofic, %TODO:original defn was?
but it is the most convenient to us.  The set $P$ 
is also called the set of forbidden patterns, and the forbidden patterns 
themselves are sometimes called local constraints.

Let $C$ be a finite set of colors.  A \emph{Wang tileset}, or just
\emph{tileset} is any subset
$T\subseteq C^4$, whose elements are understood as squares with
colored sides, together with the rule that two tiles can be laid
adjacent to each other if and only if they have the same color on the
edge they share.  If it is possible to tile the whole plane with a given
tileset, then the set of ways to do this is an SFT contained in
$T^{\mathbb{Z}^2}$.  The forbidden patterns are simply the $2\times 1$ 
and $1\times 2$ combinations of tiles with non-matching adjacent edges.

\subsection{Multitape machines simulated with tiles}\label{sec:prelim_multitape}

Wang \cite{wang} introduced a tileset, with one or more of the 
tiles designated
as anchor tiles, such that any tiling of the plane that includes an
anchor tile also includes the space-time diagram of a Turing
computation.  The tiles literally correspond to small fragments of
plausible-looking space-time diagram, which when pasted together
without discontinuity, inevitably produce a valid computation.  The
anchor tiles are those displaying the beginning of the tape with the
head in the start position and the machine in the start state.  
The tiles can be made to run any computation, but
if the computation halts, an infinite tiling cannot be completed, as there is
nothing to go in the next row.

We will need a multitape tileset for what follows, so here we describe 
how to modify a standard single-tape tileset to get a 
multitape one.  Let there be 
just one head which reads all the tapes below it.  The head retains its ability 
to move right and left along the bundle of tapes, but it has another possible 
action: instruct the $i$th tape to move to the right/left while the other tapes 
remain fixed.  To do this via a tileset/SFT, at the timestep at 
which the move is triggered, we annotate the entire tape that is to be moved 
with an $L$ or an $R$.  Local constraints ensure that if one part of a tape 
thinks it has $L$ or $R$ on it, then all adjacent parts of that tape 
agree, propagating the signal.  Local 
constraints ensure that the part of the tape right under the head has this 
annotation exactly when the computation calls for it.  Finally, we insist 
that if a certain tape 
at a certain timestep is annotated with the instruction to move, then that tape 
should be shifted in the appropriate direction at the next timestep.  Again, 
this is readily enforced by local checks.

\subsection{Efficient simulation and the runtime-preserving recursion theorem}
In what follows we will design an algorithm 
which depends for its good runtime on a multitape architecture, so we will 
declare that our universal Turing machine has multiple tapes.  The expression
$\varphi_{n,k}(x_1,\dots,x_i)$, where $i\leq k$, denotes the output of this 
universal machine simulating the 
$n$th $k$-tape program, which receives up to $k$ inputs, one input per tape. 
We may abbreviate $\varphi_{n,1}(x)$ as $\varphi_n(x)$, in agreement 
with the standard notation.
Let $RT(n,k,x_1,\dots, x_i)$ denote the runtime of the $n$th $k$-tape machine
on inputs $x_1,\dots x_i$.
If the universal machine simulates a machine with fewer tapes than it has, 
it can always do so with only a constant overhead per step of the simulated 
machine.  To do this, it
uses its tapes to mimic the actions of the simulated machine exactly, guided
by the additional tape on which the program is written, which is also used 
to calculate the next simulated move.

By appeal to the Church-Turing thesis, we will often define an
algorithm by describing it informally (possibly including
implementation details), and then assume we have in hand a machine
index $n$ for our algorithm.  By appeal to the $smn$ theorem\footnote{
The $smn$ theorem that says there is an algorithm for
modifying a machine index to replace some of its inputs with
hard-coded values; see e.g. \cite{soare} for details.} we will often
informally describe an algorithm which has some parameter $e$ meant to
be filled in later, and then assume we have in hand a computable
function $f$ such that $f(e)$ is a machine index for the algorithm
after parameter $e$ is filled in.  

Now we'll observe that a runtime-preserving version of the recursion theorem
holds, via the same proof as the regular recursion theorem.  The runtime 
preservation comes at the small cost of one additional tape.
\begin{prop}
For any computable function $f$ and any $k$,
there is an input $n$ and a constant $C$ such 
that for all inputs $x_1,\dots x_k$, the computations 
$\varphi_{n,k+1}(x_1,\dots,x_k)$ and
 $\varphi_{f(n),k}(x_1,\dots,x_k)$ either both converge 
to the same value or both diverge,
and if they both converge then 
$RT(n,k+1,x_1,\dots,x_k) < 
C \cdot RT(f(n),k, x_1,\dots, x_k).$
 \end{prop}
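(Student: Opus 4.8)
The plan is to mimic the classical proof of Kleene's recursion theorem, but carried out in a way that tracks runtime overhead carefully, using the extra tape promised in the statement. Recall the usual proof: given $f$, one constructs from a ``diagonal'' helper a program that, on input $x$, first computes $f$ applied to (a code for) itself and then runs the resulting machine on $x$. The only two sources of slowdown are (i) the one-time cost of computing the relevant index via $f$ and an $smn$-style substitution, and (ii) the cost of the universal machine simulating the machine $\varphi_{f(n),k}$ step-for-step. Source (i) is a constant independent of $x_1,\dots,x_k$, and source (ii) is, by the paragraph on efficient simulation, only a constant factor per simulated step. Multiplying these gives the bound $RT(n,k+1,x_1,\dots,x_k) < C\cdot RT(f(n),k,x_1,\dots,x_k)$.

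In detail, first I would fix a computable function $d$ (via the $smn$ theorem) such that for every index $e$, $\varphi_{d(e),k}(x_1,\dots,x_k)$ does the following: it computes $\varphi_{e,k+1}(e,x_1,\dots,x_k)$ in the sense that it treats the first argument of the $(k+1)$-tape machine $\varphi_e$ as hardcoded to the value $e$ itself, passing $x_1,\dots,x_k$ to the remaining $k$ tapes. This is exactly the $smn$ construction, and $d$ is total computable; moreover $d(e)$ is obtained from $e$ by a fixed amount of editing, so the \emph{runtime} of $\varphi_{d(e),k}$ on $x_1,\dots,x_k$ exceeds that of $\varphi_{e,k+1}$ on $(e,x_1,\dots,x_k)$ by at most a universal constant (writing the fixed string $e$ onto a tape costs $O(|e|)$, which for fixed $e$ is a constant). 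Next, let $m$ be an index for the $(k+1)$-tape machine which, on inputs $(e,x_1,\dots,x_k)$, first computes $d(e)$ on its spare $(k+1)$st tape and then simulates $\varphi_{f(d(e)),k}(x_1,\dots,x_k)$; set $n=d(m)$. Unwinding: $\varphi_{n,k}(x_1,\dots,x_k)=\varphi_{d(m),k}(x_1,\dots,x_k)$ behaves as $\varphi_{m,k+1}(m,x_1,\dots,x_k)$, which computes $d(m)=n$ and then runs $\varphi_{f(n),k}(x_1,\dots,x_k)$. Wait --- to match the exact indexing in the statement I would instead take $n=m$ directly, so that $\varphi_{n,k+1}(x_1,\dots,x_k)$ is read as the $(k+1)$-tape machine receiving its $k$ inputs and ignoring that it ``ought'' to have a first input; that is, I will arrange $m$ so that it reconstructs its own index $n$ from the $smn$ bookkeeping (the standard trick of having the program contain a copy of its own description-generating routine), computes $f(n)$, and simulates $\varphi_{f(n),k}$. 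Either packaging works; the point is that $\varphi_{n,k+1}(x_1,\dots,x_k)$ and $\varphi_{f(n),k}(x_1,\dots,x_k)$ converge together, to the same value, because the former is by construction just the latter preceded by a halting preamble.

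For the runtime bound: let $T=RT(f(n),k,x_1,\dots,x_k)$, assumed finite. The computation $\varphi_{n,k+1}(x_1,\dots,x_k)$ consists of (a) the preamble: reconstructing $n$, computing $f(n)$, and loading the program $\varphi_{f(n)}$ onto a work tape --- all of this depends only on $n$ and $f$, not on the $x_i$, so it takes some constant $C_1=C_1(n,f)$ steps; followed by (b) the simulation of $\varphi_{f(n),k}$ for $T$ steps, which by the efficient-simulation paragraph (our universal machine simulates a machine with fewer or equally many tapes at constant overhead per step, here using the $(k+1)$st tape to hold the program of $\varphi_{f(n)}$ and to compute next moves) costs at most $C_2\cdot T$ steps for a universal constant $C_2$. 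Hence $RT(n,k+1,x_1,\dots,x_k)\le C_1+C_2\cdot T \le C\cdot T$ for $C=C_1+C_2$, using $T\ge 1$. This gives the claimed inequality.

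The main obstacle --- and the only real subtlety beyond bookkeeping --- is making sure the ``constant'' overheads really are constants, i.e. independent of the $x_i$. The $smn$ substitution hides a potential trap: substituting a \emph{variable} value would cost time growing with that value, but here the substituted value is the fixed index $n$ (equivalently $m$), so the cost is a function of $n$ alone. Similarly, the recursion-theorem self-reference must be implemented so that the program reconstructs its own index once, at the start, rather than re-deriving it per step. The extra tape mentioned in the proposition statement is precisely what lets us park the program-text of $\varphi_{f(n)}$ out of the way so that the $k$ working tapes of the simulated machine are mimicked one-to-one, keeping the per-step simulation overhead a genuine constant rather than something that degrades (e.g.\ logarithmically) as the simulated tapes grow. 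Once these points are checked, the rest is the verbatim classical argument.
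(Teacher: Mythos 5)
Your proof is correct and follows essentially the same route as the paper: run the classical fixed-point construction, but arrange for the self-index $n$ to be reconstructed on the spare $(k+1)$st tape in a constant-time preamble (independent of $x_1,\dots,x_k$), compute $f(n)$ there, then simulate $\varphi_{f(n),k}$ on the other $k$ tapes with constant per-step overhead, so that $RT(n,k+1,\vec{x}) \le C_1 + C_2\cdot RT(f(n),k,\vec{x})$. The mid-proof wobble where you first set $n=d(m)$ (a $k$-tape index, mismatching the statement) and then correct to $n=m$ is harmless, and the corrected construction matches the paper's $n=d(v)$ with $\varphi_v(v)=f(n)$; the paper also explicitly bails out if the $(k+1)$st tape is nonempty, a minor bookkeeping detail you could add to keep $\varphi_{n,k+1}$ well-defined as a $(k+1)$-input machine.
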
  

\begin{proof}
As in the usual proof of the recursion theorem, we let $n = d(v)$ where $d(u)$ 
is a computable function such that $\varphi_{d(u),k+1}(x_1,\dots, x_{k+1})$ 
does the following:
\begin{enumerate}
\item If $x_{k+1} \neq \emptyset$, halt.
\item On the empty $k+1$st tape, calculate $\varphi_u(u)$ (by any method).
\item Using the $k+1$st tape as the simulation work tape, simulate 
$\varphi_{\varphi_u(u),k}(x_1,\dots,x_k)$ on the other $k$ tapes 
with constant overhead, and return its output.
\end{enumerate}
and $v$ is the index such that $\varphi_v(x) = f(d(x))$.
Then $RT(d(v),k+1,x_1,\dots,x_k)$ 
consists of a constant amount of time for steps (1)-(2), (where step 2 finishes 
because $\varphi_v$ is total) followed by $O(RT(f(d(v)),k,x_1,\dots,x_k))$ 
steps if $\varphi_{f(d(v)),k}(x_1,\dots,x_k)$ converges, and divergence 
otherwise. 
\end{proof}

\section{Expanding tileset constructions}\label{sec:expanding}

To establish notation and conventions for our construction, and because 
we will build on it directly, in this
section we summarize the expanding tileset construction of \cite{drs}.
A tileset $T$ \emph{simulates} a tileset $S$ at zoom
level $N$ if there is an injective map $\phi:S\rightarrow T^{N^2}$
which takes each tile from $S$ to a valid $N\times N$ array of tiles
from $T$, such that
\begin{itemize}
\item For any $S$-tiling $U$, $\phi(U)$ is a $T$-tiling.
\item Any $T$-tiling $W$ can be uniquely divided into an infinite array of
 $N\times N$ macrotiles from the image of $S$.
\item For any $T$-tiling $W$, $\phi^{-1}(W)$ is an $S$-tiling.
\end{itemize}
Here we have abused notation to let $\phi$ map tilings to tilings in
the obvious way.  Given a sufficiently well-behaved sequence $N_0, N_1,\dots$, 
the following construction provides a sequence of tilesets
$T_0,T_1,\dots$ such that for sufficiently large $i$, each $T_i$ simulates 
$T_{i+1}$ at zoom level $N_i$.
Therefore, if $i_0$ is sufficiently large, $T_{i_0}$ simulates each $T_{i_0+k}$
 at zoom level $\prod_{j=i_0}^{i_o+k-1} N_j$.

Let $s$ be the number of bits needed to encode all the side colors for a 
tileset that simulates a universal Turing machine $u$ with four tapes, 
which we refer to as the program tape, the parameter tape, the color tape, 
and the extra tape.\footnote{In this first construction the separate tapes 
are not needed, but we use them so that the main construction can build 
directly on this one.  
The ``extra tape'' is the one which gets swallowed in the 
runtime-preserving recursion theorem.
In \cite{drs}, polynomial overhead 
associated to the recursion theorem was acceptable, so they did not 
need to enter into these details.}
The side colors of 
tileset $T_i$ will be binary strings of length $s+2+2\log N_i$.  
Given a tile, the $4s$ bits obtained by taking the first $s$ bits from each 
color will be called the ``machine part'' of the tile; the next 8 bits (2 per 
color) are the ``wire part'', and the remaining bits are the ``location 
part''.
The permitted location parts are as shown in Figure \ref{fig:drs_layout}(a), 
where addition is mod $N_i$.
\begin{figure}
\begin{tikzpicture}
\draw (.5,1) rectangle (1.5,2);
\node at (1,.5) {$(x,y)$};
\node at (-.25,1.5) {$(x,y)$};
\node at (1, 2.5) {$(x,y+1)$};
\node at (2.75,1.5) {$(x+1,y+1)$};
\fill[color=gray!20] (4, 1.4375) rectangle (4.875, 1.5625);
\fill[color=gray!20] (4.75, .75) rectangle (4.875, 1.4375);
\fill[color=gray!20] (4.875, .75) rectangle (5.125, .125);
\fill[color=gray!20] (5.125,.125) rectangle (5.5625, .375);
\fill[color=gray!20] (5.4375,0) rectangle (5.5625,.125);
\fill[color=gray!20] (5.5625,.25) rectangle (6.75, .375);
\fill[color=gray!20] (6.75,.375) rectangle (6.625, 1.5625);
\fill[color=gray!20] (6.625, 1.5625) rectangle (7, 1.4375);
\fill[color=gray!20] (5.125,.75) rectangle (5.25,2.75);
\fill[color=gray!20] (5.25,2.75) rectangle (5.5625, 2.625);
\fill[color=gray!20] (5.5625, 2.625) rectangle (5.4375,3);
\draw (4,0) rectangle (7,3);
\draw (4.75,.75) rectangle (6.25, 2.25);
\node at (5.5, 1.5) {TM};
\draw (4.75, .75) -- (4.75, 1.4375) -- (4, 1.4375);
\draw (4.875, .75) -- (4.875, 1.5625) -- (4, 1.5625);
\draw (4.875,.75) --(4.875, .125) -- (5.4375,.125) -- (5.4375,0);
\draw (5,.75) -- (5, .25) -- (5.5625,.25) -- (5.5625,0);
\draw (5,.75) -- (5,.25) -- (6.75,.25) -- (6.75, 1.4375) -- (7,1.4375);
\draw (5.125,.75) -- (5.125, .375) -- (6.625,.375) -- (6.625, 1.5625) -- (7,1.5625);
\draw (5.125,.75) -- (5.125, 2.75) -- (5.4375,2.75) -- (5.4275, 3);
\draw (5.25,.75) -- (5.25, 2.625) -- (5.5625, 2.625) -- (5.5625, 3);
\draw (5.5625,3.125) -- (5.5625, 3.25)--(6.6,3.375);
\node at (5.625,3.625) {\footnotesize $s+2+2\log N_{i+1}$};
\draw (5.4375,3.125) -- (5.4375,3.25) -- (4.5,3.375);
\draw (8.25,2) rectangle (9.25,3);
\draw (10, .5) rectangle (11,1.5);
\node at (8,2.5) {$1$};
\node at (8.75, 3.25) {$X$};
\node at (9.5,2.5) {$1$};
\node at (8.75, 1.75) {$X$};
\node at (9.75,1) {$0$};
\node at (10.5, .25) {$0$};
\node at (10.5, 1.75) {$X$};
\node at (11.25,1) {$X$}; 
\node at (1,-.5) {(a)};
\node at (5.5, -.5) {(b)};
\node at (9.5,-.5){(c)};
\end{tikzpicture}
\caption{(a) Location part of a tile with position $(x,y)$. (b) Overall 
layout of an $N_i\times N_i$ macrotile. (c) Tiles with wire parts carrying 
a $1$ horizontally and a $0$ through a turn.}\label{fig:drs_layout}
\end{figure}
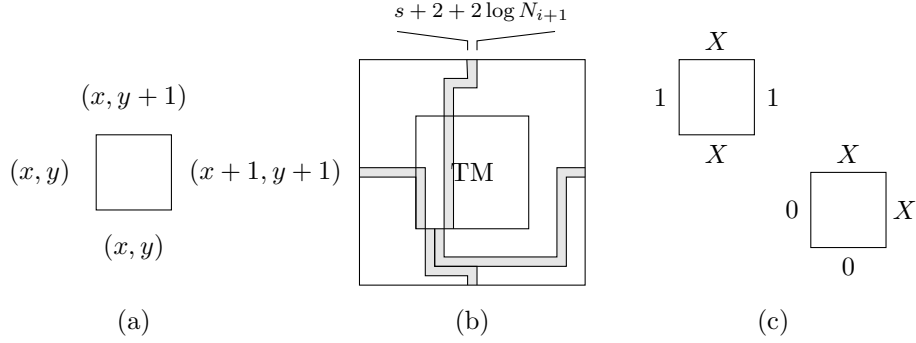
This ensures a unique division of any $T_i$-tiling into an infinite
array of $N_i\times N_i$ macrotiles.  The possibilities for the machine and wire
parts of a tile depend on the location part of the tile, which corresponds to
its position within its $N_i\times N_i$
parent tile.  To ensure a layout such as illustrated in Figure 
\ref{fig:drs_layout}(b), centrally located child tiles must display legal 
universal machine 
tiles with the machine part of their colors; the bottom left tile of this 
computation region must display an anchor tile.

Any tile that falls within 
one of the four $(s+2+2\log N_{i+1})$-tile-wide stripes pictured is
required to use its wire part as in Figure \ref{fig:drs_layout}(c) to 
participate in transmitting a bit between the edge of the macrotile and the 
bottom edge of the computation region.  Any tile at the bottom edge  of the 
computation region acts as the end of the wire, but must verify that 
the data on its wire bits matches what 
is written on the ``color tape'' as displayed by its machine bits.  Any bits  
that are not being used in the ways described should be set to some 
common neutral value.
  
One sees that the $N_i\times N_i$ macrotiles will join with each other
exactly as tiles with macrocolors of the appropriate 
length for a $T_{i+1}$ tile would, 
and the computation region is being set up to do 
computations on those macrocolors.

Let $\varphi_{f(e),2}(i,c)$ be the algorithm which does:
\begin{enumerate}
\item Compute $N_i$.
\item Check $|c| = 4(s+2+2\log N_{i+1})$.
\item Considering $c$ as the concatenation of four colors, check that it obeys 
all the restrictions described above.
\item If the machine part of $c$ gives it a view of any bit of the computation
region's parameter tape, check that bit is consistent with ``$i+1$'' being on 
the parameter tape.
\item As above, check that any visible bit of the program tape is consistent 
with ``$e$'' being the program.
\item\label{step:expanding_last} If any checks fail, halt.  Otherwise enter an 
accepting state and run forever.
\end{enumerate}

Note this algorithm runs in poly$(\log N_{i+1})$ time whenever $i$ is the first 
input, assuming $i\mapsto N_i$ is poly$(\log N_{i+1})$-computable.  
(We didn't specify a particular scheme for the wire layout, but a 
poly$(\log N_{i+1})$ scheme certainly exists.)
Using the 
runtime-preserving recursion theorem, let $n$ be such that 
$\varphi_{n,3}(i,c)$ and $\varphi_{f(n),2}(i,c)$ converge or diverge 
together, and both reach Step \ref{step:expanding_last} in poly$(\log N_{i+1})$ 
time.  Let $R(i) \in O($poly($\log N_{i+1}))$ 
denote the maximum amount of time for the 4-tape 
universal machine simulation $\varphi_{n,3}(i,c)$, where $c$ is arbitrary.
  Assuming poly$(\log N_{i+1}) \ll N_i$, let $i_0$ be large enough that 
$R(i) < N_i/2$ for all $i \geq i_0$.    
For $i\geq i_0$, let
$$T_i = \{c : \varphi_{n,3}(i,c) \text{ enters an accepting state.}\}.$$
It is not difficult to prove that each $T_i$ simulates $T_{i+1}$ 
via the map that sends a $T_{i+1}$-tile $c$ to the unique macrotile whose 
macrocolors are given by $c$.
The key is that for $i\geq i_0$, the computation region is large enough to 
accommodate the entire computation, so a macrotile with given macrocolors $c$ 
is completed if and only if the computation accepts $c$.  (If the computation
halts, there is no way to fill in the computation region, so a macrotile with 
defective colors $c$ cannot be formed.)

We end the section by recalling the assumptions made on $\{N_i\}_{i<\omega}$, 
namely that $i\mapsto N_i$ is poly$(\log N_{i+1})$-computable and that 
poly$(\log N_{i+1}) \ll N_i$.

\section{An SFT that factors onto the $S$-square shift}\label{sec:informal}

In this section we give a proof of the main result for the $S$-square
shift.  As in \cite{drs}, we augment the symbols $0$ and $1$ to
make them also into tiles with various possible color combinations.
The tiles will be designed to carry out the expanding tileset
construction of the previous section, using the extra time in
Step \ref{step:expanding_last} to enumerate $\mathbb{N}\setminus S$.
The higher the layer of simulated macrotile, the more of
$\mathbb{N}\setminus S$ is enumerated, but higher layers are also
further removed from the square size information available at the
pixel level.  To keep the higher level computations aware of the exact
square sizes in their vicinity, the ``child tiles'' making up each
``parent tile'' make sure that the sizes they know about all get
written on their parent's parameter tape.  
With this information, the algorithm running in
a tile can halt its computation, preventing any tiling from being formed, 
if it finds a forbidden size on its parameter tape.
This is the same general
strategy as was used in \cite{drs} for the case of the 
effectively closed shift with constant columns.

For readers familiar with \cite{drs}, the following 
aspects of this construction have no counterpart in that work.  
First, each macrotile 
must keep track of every square in its vicinity, storing 
roughly $L^{2/3}$ bits of information in each macrotile of pixel 
side length $L$.  This is much more than the $\log L$
information that needed to be stored in \cite{drs}.  
This large 
amount of information together with a large required growth rate 
make it impossible for any one 
child to have a copy of its parent's complete information, 
which was the parent-child coordination tool of \cite{drs}. 
We develop a new more distributed communication scheme.
Finally, as the input information takes up a significant fraction 
of the macrotile's tape, polynomial time algorithms no longer suffice;
all our algorithms must run in well under 
quadratic time, necessitating some attention paid to the computation 
model.

The tiles ultimately produced can be defined as the color combinations
accepted by a certain algorithm.  Pseudocode for that algorithm is
given in Section \ref{sec:algorithm} for reference.  Here, we
describe the algorithm informally, motivating each of its components
as we introduce them.

\subsection{Enforcing a sea of squares}

The first step is to observe the $\mathbb{N}$-square shift is sofic,
a fact which is surely folklore, but we include a proof for the reader's
convenience, and because we will build on the SFT $Y$ defined there.

\begin{lemma}
The $\mathbb{N}$-square shift is sofic.
\end{lemma}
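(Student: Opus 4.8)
The plan is to exhibit a Wang tileset (equivalently, an SFT) $Y$ together with a letter-to-letter factor map $\phi:\Lambda\to\{0,1\}$ whose image is exactly the $\mathbb{N}$-square shift. The key idea is that a ``sea of squares'' is a purely local geometric condition once we are allowed to decorate each $1$-pixel with auxiliary information recording its position inside the square it belongs to. Concretely, I would let each tile carrying the symbol $1$ also carry a pair of flags (or a small bounded amount of extra data) indicating whether it is on the left/right/top/bottom boundary of its square, whether it is a corner, and, crucially, signals that propagate the ``I am the left edge of a square of this height'' information. The tricky part is that the side length of a square is unbounded, so a single tile cannot store it; instead the information must be carried implicitly by matching propagated signals along rows and columns.

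The cleanest way I know to do this is the standard ``wire'' or ``diagonal'' trick. First, arrange that every maximal horizontal run of $1$s in a row has a well-defined left end and right end (local constraint: a $1$ with a $0$ to its left is a left-end, etc.), and similarly for columns. Then impose that the top-left corner of a block of $1$s emits a diagonal signal travelling down-right, the bottom-right corner emits one travelling up-left, and require these to be consistent so that a rectangle of $1$s must in fact be a square: a diagonal line of slope $1$ from the top-left corner must hit exactly the bottom-right corner. More carefully, I would have the left edge of a square and the bottom edge each be marked, have a diagonal ``\texttt{=}'' signal run from the bottom-left corner to the top-right corner, and have local rules forcing: (i) the region enclosed by a marked left edge, bottom edge, and the diagonal is filled with $1$s; (ii) the left edge and bottom edge have the same length because the diagonal reaches both their far endpoints simultaneously; (iii) outside every such square the value is $0$, except where another square's edges take over. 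One then checks the two directions: every tiling of $Y$ projects under $\phi$ to a configuration in which every connected block of $1$s is a (possibly infinite or degenerate) square on a background of $0$s; conversely, any sea of squares can be decorated with the edge marks and diagonal signals in the unique obvious way to produce a valid $Y$-tiling.

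The degenerate cases — half-planes and quarter-planes of $1$s — need a word of care, since there the diagonal signal ``goes to infinity'' and there is no far corner to match against; the local rules must simply permit an infinite edge with no terminating corner, which is automatic as long as the corner marks are not forced to exist. Similarly two squares touching only diagonally must be allowed, so the corner/edge rules should be phrased in terms of horizontal and vertical adjacency of $1$s, not diagonal adjacency. I expect the main obstacle, such as it is, to be bookkeeping: writing down a finite tileset all of whose local constraints are simultaneously satisfiable exactly on seas of squares, and verifying no ``spurious'' configurations (L-shaped blocks, rectangles that are not squares, squares with a bite taken out, two squares overlapping) can slip through. This is routine but fiddly; since the lemma is folklore, a brief description of the diagonal-matching tileset plus the two containment checks suffices, and the same $Y$ and its edge/corner decorations will then be reused as scaffolding for the measuring algorithm in the main construction.
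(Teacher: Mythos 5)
Your proposal is correct in spirit but takes a genuinely different route from the paper. You decorate each $1$-cell with edge/corner flags and send a $45^\circ$ diagonal signal from the bottom-left to the top-right corner, so that a rectangular block of $1$s can only be consistently completed when its width equals its height; this is a standard, valid construction, and your list of concerns (degenerate half-/quarter-planes, L-shapes, diagonal touching) is the right one to check. The paper instead replaces every $1$-pixel by one of the symbols $\leftarrow,\rightarrow,\uparrow,\downarrow,\ulcorner,\urcorner,\llcorner,\lrcorner,\circ$, so that each filled square looks like a system of nested, directed, counterclockwise square boundaries with a $\circ$ or a $2\times 2$ corner block at its center, and then forbids exactly the $2\times 2$ patterns that never occur. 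The correctness argument is then a short structural analysis: nonzero symbols form directed paths making only left turns, hence unions of cycles and bi-infinite paths; a spatially minimal cycle must contain $\circ$ or the small corner block; local rules force cycles to nest as concentric $(n+2)\times(n+2)$ squares outward; and the bi-infinite paths handle the half- and quarter-plane components. The nesting thus encodes the side-length equality implicitly (each new cycle adds one to each side), so no separate diagonal decoration is needed and the shape and size checks are unified in one argument, whereas your version cleanly modularizes them (edge markings for rectangularity, the diagonal for squareness) at the cost of a somewhat longer verification that the $2\times 2$ forbidden-pattern list really excludes all non-square shapes. Either decorated alphabet supplies the explicit corner information that the main construction reuses downstream.
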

\begin{proof}
Consider the alphabet consisting of $0$s and the symbols $\leftarrow,
 \rightarrow \uparrow, \downarrow, \llcorner,  \lrcorner,\ulcorner,
 \urcorner, \circ$.
Let $Y$ be the shift
whose elements look like seas of squares with nested 
directed counterclockwise squares drawn
inside them, plus limit points (see
Figure \ref{fig:Y}).  To see $Y$ is an SFT, we
argue we can obtain it by forbidding every $2\times 2$ pattern that
does not appear in any element of $Y$.

\begin{figure}
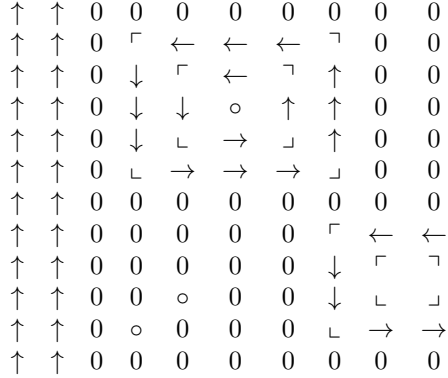

$\begin{array}{cccccccccc}
\uparrow & \uparrow & 0 & 0 & 0 & 0 & 0 & 0 & 0 & 0 \\
\uparrow & \uparrow & 0 & \ulcorner & \leftarrow &\leftarrow & \leftarrow &\urcorner & 0 & 0\\
\uparrow & \uparrow & 0 & \downarrow & \ulcorner & \leftarrow & \urcorner & \uparrow & 0 & 0\\
\uparrow & \uparrow & 0 & \downarrow & \downarrow & \circ & \uparrow & \uparrow & 0 & 0\\
\uparrow & \uparrow & 0 & \downarrow & \llcorner & \rightarrow & \lrcorner & \uparrow & 0 & 0\\
\uparrow & \uparrow & 0 & \llcorner & \rightarrow &\rightarrow & \rightarrow &\lrcorner & 0 & 0\\
\uparrow & \uparrow & 0 & 0 & 0 & 0 & 0 & 0 & 0 & 0 \\
\uparrow & \uparrow & 0 & 0 & 0 & 0 & 0 &\ulcorner &\leftarrow &\leftarrow \\
\uparrow & \uparrow & 0 & 0 & 0 & 0 & 0 &\downarrow &\ulcorner &\urcorner \\
\uparrow & \uparrow & 0 & 0 & \circ & 0 & 0 &\downarrow &\llcorner &\lrcorner \\
\uparrow & \uparrow & 0 & \circ & 0 & 0 & 0 &\llcorner &\rightarrow &\rightarrow \\
\uparrow & \uparrow & 0 & 0 & 0 & 0 & 0 & 0 & 0 & 0 \\
\end{array}$
\caption{A permitted pattern from an element of $Y$}\label{fig:Y}
\end{figure}

These forbidden patterns are enough to enforce that every nonzero symbol
(except for $\circ$) must have its line continued on an adjacent
symbol with the same orientation, 
resulting in directed paths with no beginning or end which make only left turns.
Therefore, each nonzero component is a union of cycles and/or
bi-infinite paths.  We claim first that if a nonzero component contains a
cycle, it contains one of the patterns
$\begin{array}{cc} \ulcorner & \urcorner \\ \llcorner
& \lrcorner\end{array}$ or $\circ$.  Cycles are partially ordered by
spatial inclusion, so pick a cycle which has no cycle inside of it.
Since all turns are left turns, every cycle is convex, so a 
rectangle.  By local restrictions, given a corner, only three things
can go diagonally inside of that corner: another corner of the same
kind (nested), a $\circ$, or a corner of the diagonally opposite kind.
The first case is impossible since it implies another cycle inside
this one.  The latter two cases correspond to the presence of a
$\circ$ and the $2\times 2$ pattern, respectively.

Next, local restrictions guarantee that if a component
contains an $n\times n$ square cycle, either that is the outer
boundary of the component, or the component also contains a concentric
$(n+2)\times(n+2)$ square cycle.  Therefore, if a component has any
cycle, it is made entirely of square cycles nested in the proper way,
and is either a finite or infinite square.

Consider now components with no cycles.  Since all turns are left turns, 
the only possible bi-infinite paths are straight
lines, paths with one corner, and paths with two corners.  Paths with
two corners are impossible by a minimality argument.

If a component contains a path with one corner, there must be another
corner of the same kind diagonally nested in that corner, implying an
entire path with one corner is nested inside the first path.
Therefore, such a component is at least a quarter plane.  Local
restrictions imply that for each path with one corner, either it is
the outer boundary of the component, or there is another
single-corner-path nesting with it on the outside.  Well-formed
quarter-plane or full-plane ``squares'' result.
If component contains a straight line, then by a similar argument it is 
either a half-plane or a full-plane of parallel lines.

Since each nonzero component is a square, this SFT is $Y$.  By replacing 
all nonzero symbols with 1s we obtain the $\mathbb{N}$-square shift.
\end{proof}
Going forward it will be convenient to assume that the alphabet of $Y$ includes 
two versions of each corner symbol, one for use in the interior of squares
and one for use on the outer corners. 
The previous argument is easily adapted to this augmentation.  By 
superimposing tile colors on the symbols of $Y$ instead of the original 
alphabet, we can assume that we are always dealing with a sea of well-formed 
squares, and worry only about their sizes.

\subsection{Growth Considerations}

The general strategy outlined so far has placed some restrictions on a
the possible rates of growth of the expanding tileset sequence
$\{N_i\}_{i<\omega}$ that we will use.  After developing an algorithm,
just as in the previous section we will select some $i_0$ large enough
and form a tileset $T_{i_0}$ to superimpose (according to some
restrictions) on the symbols of $Y$.  Referring to these pixel level
tiles as ``macrotiles of level $i_0$'', we will arrange that for $i>i_0$, a
macrotile at level $i$ occupies a square region whose side length in
pixels is $M_i = \prod_{k=i_0}^{i-1} N_k$.  Let $L_i
= \prod_{k=0}^{i-1}N_k$.  Then $L_i$ is an upper bound on the pixel
size of the macrotiles at level $i$, regardless of what we later
choose $i_0$ to be.

Now consider the information which a macrotile at level $i$ must
``know'' (have written on its parameter tape).  We'll say a square is
within the responsibility zone of a macrotile if it has at least one
full side in the macrotile region.  The maximum number of
distinctly-sized non-overlapping squares that can appear fully with in
a square region of pixel side length $L$ is $O(L^{2/3})$, as one can
see by considering the worst situation, in which exactly one square of
each size $1,2,3,\dots, m$ appears, where $m$ must satisfy
$\sum_{k=1}^m k^2 < L^2$.  Similarly, the maximum number of
distinctly-sized squares that can straddle the boundary of a such a
region is $O(L^{1/2})$.  Encoding the distinct sizes in a standard
binary representation, we will need $O(L_i^{2/3} \log L_i)$ bits
available on the parameter tape of each macrotile at level $i$.  Since
that macrotile is made of an $N_{i-1} \times N_{i-1}$ array of
children, its tape size is $O(N_{i-1})$.  So the sequence
$\{N_i\}_{i<\omega}$ must satisfy
$$ L_i^{2/3}\log L_i \ll N_{i-1},$$
in addition to the poly$(\log N_i) \ll N_{i-1}$ needed 
for the expanding tileset part of the construction.  The reader can verify 
that $N_i = 2^{2^{2^i}}$ is fast enough to satisfy this 
additional constraint, but $N_i = 2^{2^i}$ 
is not.  From here forward we fix $N_i = 2^{2^{2^i}}$.  There will be 
stronger growth requirements for $N_i$ later, but this choice of $N_i$ 
will work for them.

Note, by the definition of $L_i$, it cannot be avoided that $N_{i-1}^{2/3} \ll
L_i^{2/3}\log L_i$.  So whatever algorithm we run to do the
consistency checking must run in (nondeterministic) polynomial time,
but with the degree of the polynomial strictly less than 3/2.    
Some familiar operations which run in linear or $n\log n$ time on
reasonable architectures would require quadratic time on a single tape
Turing machine, which will not be good enough for our purposes.
So we use a multitape machine as described in Section
 \ref{sec:prelim_multitape}.

Let us add a couple details to that implementation which are relevant to the
current construction.  
Since there is an edge to our computation region and tapes can be shifted off 
it, we adopt the convention that any bits that go off the edge are lost and any 
part of the tape that is just returning from beyond the region is blank.  On 
the left edge, we can assume that algorithms are designed for this and don't 
move the tapes backward from their original position.  On the right edge, it 
will not cause a problem because if the head writes a symbol, the distance of 
that symbol from the left edge is less than the time that has elapsed so far 
(since the head had to get to that location), so the distance of that bit from 
the right edge is more than the amount of time that is left.  A perverse 
algorithm could possibly lose some of its input if it devoted all its time to 
shifting the input tape to the right, without even first reading the input.  
But our algorithm reads its input first.

\subsection{Consistency checking algorithm}

We'll now add more parts to the macrocolor scheme (in addition to the
machine, wire, and coordinate parts already being used to create
an expanding tileset) in order to satisfy two goals.
The first goal is
reassuring each individual child that the parent has recorded each
size that appeared in that child.  The second goal is ensuring
that any new large squares, contained in the parent but only
fragmentary in each of the children, have their size recorded in the parent.
The second goal is easier and we address it first.

\subsubsection{Tracking partial squares}

A child may contain partial
squares, such as a corner of a larger square taking a bite from a
corner of the child (there can be at most four such partial
squares) or the side of a larger square taking off the side of a
child (at most two such partial squares).

For any square which has a corner in the child, but does not have a
complete side in the child, let us assume the child knows the
coordinates (in pixels, relative to the child) and orientation of that
corner.
Either the partial square is contained in the parent macrotile or not;
if it is contained, we want to make sure the parent knows about its
size, and if it is not contained we want to make sure the parent
knows its coordinates so that the problem is passed along.  
If a child
has a corner, it uses its knowledge of its own position in the parent
macrotile to compute the deep coordinates of that corner in the parent
macrotile (info of size $O(\log L_{i+1})$).  It then uses a new 
``primary corner message passing part'' of the macrocolor to send that
information to its two neighbor children who it knows also intersect
the same square.  Any child that receives such a message must either use 
it or pass it straight on, stopping only if the edge of the parent tile 
is reached.  
If child with a corner
receives a message in return, it now has two deep parent
corner-coordinates which it can use to calculate the size of
this square, which has been confirmed to be contained in the
parent.  Both children who did this computation 
now require reassurance from the parent about this
size (increasing their number of required reassurances by at most
4 in total; we address the issue of how they get that reassurance below).  

On the other hand, any child with a corner that receives no messages back on
either arm understands that its corner is also a corner in the parent, and
needs to verify that the parent has this corner's coordinates on its parameter 
tape.  To do this we copy a strategy of \cite{drs} and introduce a new
``deep corner copy part'' of the macrocolor.
This part of the macrocolor, size $O(\log L_{i+1})$ bits, is intended to 
display a copy of the part of the parent's parameter tape that contains the 
deep corner coordinates. (Of course, children on the edge of the 
parent macrotile should display some neutral default value to any adjacent 
children of a different parent, displaying the parent information only to 
their siblings.)  To make sure this part of the macrocolor 
says what it should, each child should check that all (usually) four of 
their macrocolors agree on this part.  And any child whose machine part 
lets it see a bit of the parent's parameter tape should check that bit is 
consistent with the deep corner copy part of their macrocolor.  Once these 
checks confirm the accuracy of the information, a child with a corner that 
is also a corner in the parent can directly check 
the deep corner copy part of its own macrocolor 
to confirm that the parent recorded that corner.

Note that an infinite square that covers an entire 
half-plane is invisible to this message system, 
and the macrotiles will not know whether that space is filled 
entirely with 0s or with 1s.  Neither case is forbidden, so 
the macrotile ignorance presents no problem.  
However, in Section \ref{sec:distinct}, we require the 
macrotiles to know where they intersect such partial squares, and 
it is not difficult to augment the message system in order to keep 
track of them.

\subsubsection{Catching the square at a four-way tile system boundary}

The corner message-passing described above will ensure that a
macrotile has on its tape a record of each distinct size of square in its 
responsibility zone. However, in certain exceptional full tilings 
there is the possibility that a single square never enters the 
responsibility zone of any macrotile.  This can happen if the macrotiles at 
every level line up, creating four adjacent pixels which are always
at the corners of their respective macrotiles at every scale.  A square 
containing all four of these pixels could never be recorded. 
To fix this, children located at the
corners of each parent macrotile use a new
``secondary corner message passing part'' of the macrocolor to 
pass deep corner coordinates to
the corner children of adjacent parents. If two such neighbor
children have matching corners, they both compute the relevant square's
size and both require parental reassurance about that size.  If
there is a square at a four-way tile system boundary, it will
eventually lie completely within four macrotiles, who are corners of
their parents, at which point it will be found.

\subsubsection{Reassuring the children}

If we could make a ``parent size list copy part'' of the macrocolor 
and enforce its accuracy just as in the last section, this construction 
would be almost finished, because a child wanting reassurance about a 
particular size could just look for it in that part of their macrocolor.
Here is the problem: the information of the sizes in the parent's 
responsibility zone consumes on the order of $L_{i+1}^{2/3}$ bits, which is much
greater than $N_{i-1}$, the length of the tapes of the children.  So no one
child can hold all the knowledge of its parent.  Instead, each child
will hold some subset of the knowledge of the parent,
nondeterministically choosing which of its siblings to share each
piece of knowledge with, in such a way that every child receives
reassurance that their particular sizes have been recorded by
the parent.

Some children, sitting on the parent tape, have direct access
to a bit of the parent's information.  A group of horizontally adjacent 
children, knowing
that they together hold all the bits describing a single size on
the tape of the parent, can use a new ``parent size reading part'' of the 
macrocolor to coordinate on the correct value for that
size.  So we have a situation where each parent size
is known by at least one child, each child needs to receive
reassurance about up to $L_i^{2/3}$ sizes, and each child can
know about less than $N_{i-1}$ sizes.

So we do make a ``parent size list'' part of the macrocolor, but it carries 
only a subset of the parent's actual list.  
We permit the children to display different parent size lists on
each of their four sides.  
Each size on the list comes 
annotated with a bit to say which direction the information is going.
If a child receives parent information at
one side, it nondeterministically chooses for each of the three other
sides, which of that information is passed on.  
To ensure that parent
information is genuine (not created by children passing a hallucinated
size around in a loop, for example), whenever siblings share a
size they also annotate it with a counter, which the receiving
sibling must increment when passing the size on further.  Only
the children who are part of a group sitting on the part of the parent tape 
showing a
particular size are permitted to use the ``0'' counter for that
size.  Since there are $N_{i}^2$ children, an upper bound on
the space needed for each counter is $2\log N_{i}$ bits.

Since the counter guarantees that received parent information is
genuine, if some size is represented in a child but not in the
parent, there is no way to reassure that child and a valid tiling is
not formed.  On the other hand, if there is some way to distribute all
of the information subject to the constraints, then the children will
nondeterministically find it and produce (perhaps many) valid tilings
in which each child is reassured.

Note that we can ignore the counters
when describing a communication scheme; it suffices that for each parent
size $n$ and each child that needs to know about $n$, there is a
path of children who know about about $n$ which connects the child to
the portion of the parent tape on which $n$ is written.  As long as
all the children who need to know about $n$ are in one connected
component, there is a way to do the counter that is valid, and the
children will nondeterministically find it.

\subsubsection{Abstracting the communication problem}

The communication problem described in the previous section can be
mostly separated from the details of the tiling construction.  Consider a
graph with $N_{i}^2$ nodes arranged in a square grid, where
two nodes are connected by an edge if they are
vertically or horizontally adjacent.  Now suppose that there are
$O(L_{i+1}^{2/3})$ kinds of labels which can be
attached to both edges and nodes.  Each node can receive at most
$O(L_i^{2/3})$ labels.  Furthermore, in
each $M\times M$ subgrid of nodes, there can be at most $O((ML_i)^{2/3})$
distinct labels appearing on nodes in that subgrid.  
Here the nodes are the children, the labels on a node are the sizes about 
which that node needs reassurance, and the $M\times M$ subgrid condition
comes from the bound on the number of distinct sizes of square that
can appear in an $ML_i\times ML_i$ region.  In
this graph context, the problem becomes: is there a way to assign
labels to edges so that all the nodes with a given label are path
connected via edges carrying that label, while limiting the number of
labels per edge to $o(\frac{N_{i-1}}{\log N_{i}})$?  This bound on the
number of labels per edge takes into account the length of the child's
tape $(N_{i-1}/2)$ and the fact that each size consumes $\log L_{i+1} + 2\log
N_{i} \in O(\log N_{i})$ bits of the macrocolor.

In the next section we show that there is always such a labeling.
This labeling connects the children who need reassurances about size $n$ 
to each other, but does not connect them to the parent tape, since the 
information collected by children 
on the parent tape does not satisfy the $M\times M$ subgrid 
condition.  But in the labeling we find, each connected component will include 
an entire row.  So the
children sitting where $n$ is written on the parent tape can
connect to their respective components at a cost of at most one
additional piece of knowledge per child by each passing
their knowledge directly upward or downward.
This completes the description of the algorithm, though we will return to 
its implementation to gain a runtime improvement in Section 
\ref{sec:runtime}.

\subsection{The plaid}\label{sec:plaid}

Returning to the abstracted graph problem of the previous section, 
we now describe a labeling which solves the problem and 
uses $O(2^iL_i^{2/3})$ labels per edge.  The reader can verify that 
$2^iL_i^{2/3}\log N_i \ll N_{i-1}$, so this is acceptable.

The scheme can be visualized as a massively multiscale plaid pattern.
At the top level, there are up to $O(L_{i+1}^{2/3})$ distinct labels
used in the $N_{i}\times N_{i}$ grid.  
Apply labels to all the vertical edges in a pattern of dense
stripes: each edge in the first column receives the first
$L_i^{2/3}$ distinct labels, each edge in the second column receives
the next $L_i^{2/3}$ distinct labels, and so on.  When the
distinct labels run out (as they will since $L_{i+1}^{2/3}/L_i^{2/3} = 
N_i^{2/3}  \ll
N_{i}$) return to the beginning of the list of distinct labels 
and repeat.  Do the same with horizontal
edges.  So far each edge has $L_i^{2/3}$
labels.  This was the first layer of plaid.

Next partition the grid into square subgrids whose
side length is long enough that for every label,
there is a vertical and horizontal line of edges with that label in
each subgrid.  
Such a division creates subgrids with length on the order of $N_{i}^{2/3}$.
By the subgrid condition, the number of distinct labels which have been 
applied to nodes in that subgrid is $O((N_i^{2/3}L_i)^{2/3})$, or 
$O(N_i^{4/9}L_i^{2/3})$.
Now we repeat the process inside each subgrid, adding $L_i^{2/3}$ more 
labels per edge within the subgrid,
this time using
only labels which have been applied to nodes in that subgrid.  
Again this
results in a plaid pattern within the subgrid 
with each label appearing on a horizontal and vertical stripe of edges 
in every subsubgrid with length on the order of $N_i^{4/9}$.
All the edges with a given label remain connected since the new smaller 
scale plaid intersects the larger scale plaid; see Figure \ref{fig:plaid}.

\begin{figure}

\begin{tikzpicture}

\foreach \x in {0, 4, 8}{
\fill[color=gray!20] (\x,0) rectangle (\x+3,3);
\draw[thick] (\x+.8,3)--(\x+.8,0);
\draw[thick] (\x,2.2)--(\x+3,2.2);
}
\node at (1.5,-.5) {(a)};
\node at (5.5, -.5) {(b)};
\node at (9.5,-.5){(c)};

\foreach \x in {4,8}{
\draw[step=.5, xshift=.1cm, yshift =.1cm] (\x-.1,-.1) grid (\x+2.9,2.9);
}

\draw[step=0.075, very thin] (9.5,.5) grid (10,1);
\draw[step=0.075, very thin] (8.5,1.5) grid (9,2);

\end{tikzpicture}
\caption{Edges carrying a fixed label in a square subgrid of side length 
$\sim N_i^{2/3}$.  (a) Labels from the top layer of plaid only (layer 0).
 (b) Labels from layers 0-1. (c) Labels from layers 0-2; we see that of 
the many $\sim N_i^{4/9}\times N_i^{4/9}$ subsubgrids, only two of them 
have nodes which bear the given label.}\label{fig:plaid}
\end{figure}
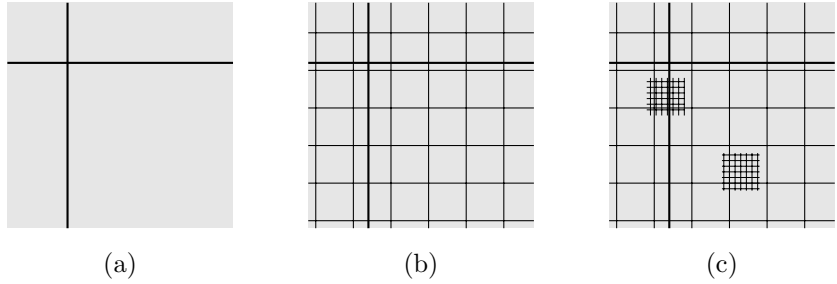

Continuing in this way, we can bound the number of steps it takes to
get down to some fixed small subgrid size (at some point the
implicit rounding in our calculations starts to matter and we take
that point).  The subgrid size at step $j$ is $O(N_{i}^{(2/3)^j})
= O(2^{2^{2^i}(2/3)^j})$, so the number of steps is bounded by
$O(2^{i})$.  Once we reach a fixed small size, we stop with the plaid
and just put all the labels from the nodes in that base grid onto all of 
the edges of that grid, which adds
an additional finite multiple of $L_i^{2/3}$ labels to each edge.  In this way,
all the nodes with a given label are connected
to all other nodes with that label
with a total of $O(2^iL_i^{2/3})$ labels per edge.

\subsection{Runtime considerations}\label{sec:runtime}

Each macrotile at level $i$ has a parameter tape with $O(L_i^{2/3}\log
L_i)$ bits of information on it, the bulk of which is a long list of
sizes of squares contained in that tile.
The color tape of that tile is also filled with macrocolor
information, most of which is $O(2^iL_i^{2/3}\log N_i$) bits worth of
parental sizes, annotated with counters and some indication of the
direction of information flow.

The algorithm the tile runs to determine if the macrocolors behave
appropriately needs to finish in $\ll N_{i-1}$ steps in order for each
level to simulate the next properly and still have time left over for
enumerating $\mathbb{N}\setminus S$.  The bulk of the time is spent
dealing with those very long lists of sizes, so in this section we show
how to make that run close to linear in the number of sizes.  (Recall
that it is necessary to keep the algorithm strictly faster than
polynomial exponent 3/2 in the number of sizes.)  The time taken by
various other steps (expanding tileset, deducing new sizes from
corners) is small in comparison; these little runtime contributions are
collected explicitly in the pseudocode of the next section for
reference.

For each size appearing on the
parameter tape, it must also appear in some macrocolor on the color tape; for
each size that appears on the color tape, all (up to 4) of its macrocolor
appearances must be accompanied by counters that increment
appropriately.  
If all (up to 5) mentions of a particular size could be
physically colocated, then checking that single size for compliance would take
poly$(\log N_i)$ time, so the total time spent checking would be
$2^iL_i^{2/3}$poly$(\log N_i)$, which is in fact the runtime we're aiming for.
But if the mentions are not colocated, even checking a single size
could take $2^iL_i^{2/3}\log N_{i}$ time, most of which is just for the head 
to travel between
mentions.

As a first step to colocation, we modify the universal machine to have 
four color tapes, one
for each macrocolor, so that the macrocolors are essentially
superimposed instead of concatenated.  
We also give the universal machine an additional tape, bringing its total
number of tapes up to 8, so that our algorithm 
can be a 6-tape algorithm that has a work tape and 
treats the color and parameter tapes as read-only.
Next, we declare (and use the
algorithm to check) that the lists of sizes on the parameter tape
and in the macrocolors must be sorted and have no repeat sizes.  
We do not have to do the
sorting ourselves since the nondeterminism can take care of it.  Still
the original problem remains: the lists contain different sizes
and may be badly misaligned.  This can be fixed by allowing the tapes
to move independently.  The point is that now a single instruction can
move $O(2^iL_i^{2/3}\log N_{i})$ many bits at once, with corresponding
runtime savings.

The algorithm looks at the beginning of five superimposed lists (4
macrocolors and one parameter tape), which have been moved so that 
their first entries are aligned, and checks on the least size
$n$ it sees: if it is on the parameter tape does it appear on a
color tape? And do the counters on the color tapes behave
appropriately?  During this time it is ignoring any lists whose first
size is larger than $n$.  If all that checks out, the head moves
forward one data unit, bringing forward with it all the tapes which it
was ignoring on the last step.  This whole combo-step takes time
poly$(\log N_{i})$, and makes the algorithm ready to check the next
smallest size.  After $O(2^i L_i^{2/3})$ repetitions of this,
all the necessary checks have been made.  Thus with a multitape machine, 
the total
runtime for this step is $2^iL_i^{2/3}$(poly$(\log N_{i})$), as desired.

\section{The algorithm for the SFT}\label{sec:algorithm}

The algorithm run by macrotiles at all levels is defined as follows. 
Let $\varphi_{f(e),6}(p, c_1, c_2, c_3, c_4)$ be the algorithm which 
does the following (considering all inputs as strings).

\emph{Initialize:}
\begin{enumerate}
\item Start reading $p$, expecting first two numbers: $i_0, i$ with $i_0\leq i$.
\item Calculate $N_i = 2^{2^{2^i}}$, $L_i = \prod_{j=0}^i N_j$.
\end{enumerate}

\emph{Data Validation:}
\begin{enumerate}[resume]
\item Check that the rest of the data on $p$ is:
	\begin{enumerate}
	\item Up to four deep coordinates, with corner orientation information, 
relative to this tile. Length limit: $8\log(L_i)$ bits
	\item A list of sizes. Length limit:  $O(L_i^{2/3}\log L_{i})$ bits.
(The appropriate constant from the $O$-notation is hard-coded into the 
algorithm.)
	\end{enumerate}
If the length limit is exceeded, halt. Time: $O(L_i^{2/3}\log L_{i})$
\item Check that the data on each $c_k$ is:
	\begin{enumerate}
	\item Machine parts and wire parts. Length: $O(1)$.
	\item Coordinates of this tile in the parent.   Length: $2\log(N_{i})$.
	\item\label{parent-corner-sync} Deep corner copy part: up to four 
deep coordinates, relative to the parent.  Length limit: $8\log L_{i+1}$.
	\item\label{sibling-corner-coordinates} Primary corner message passing 
part.  Up to four deep coordinates, relative to the parent, two incoming and 
two outgoing.  Length limit: $8\log L_{i+1}$
	\item\label{playmate-corner-coordinates} Secondary corner message 
passing part.  Up to two deep coordinates, relative to this tile, one 
incoming and one outgoing.  Length limit: $4\log L_i$
	\item\label{parent-size-sync} Parent size reading part. Length: 
$\log L_{i+1}$.
	\item Parent size list part.  A list of sizes, each less than 
$L_{i+1}$, each size annotated with a counter less than $N_{i}^2$, and a flag 
that says whether that size is incoming or outgoing. Length limit: 
$O(2^iL_i^{2/3}\log L_{i+1})$.
	\end{enumerate}
If the length limit of any part is exceeded, halt.  Time: 
$O(2^i L_i^{2/3} \log L_{i+1})$.
\end{enumerate}

\emph{Dealing with corners:}
\begin{enumerate}[resume]
\item Check the machine, wire and coordinate parts just as in the 
expanding tileset construction.  If the location of this tile lets us 
see a bit of the parent tapes at time 0, check that $i_0$ and $i+1$ are 
consistently on the parameter tape and $e$ is consistently on the program 
tape.  Time: poly$(\log N_{i})$, using the fact that the relevant areas 
on the four macrocolors are colocated.
\item Check that all four deep corner copy parts match (or are neutral valued, 
as appropriate for some sides of a tile on the edge of its parent).  If 
the location of this tile lets us see the deep corner part of the parent 
parameter tape, check consistency with the deep corner copy part. Time: 
poly$(\log L_{i+1})$.
\item If there were any corners on the parameter tape, use the location 
of this tile as well as $M_i = \prod_{j=i_0}^{i-1} N_i$, the pixel length of 
this tile, to calculate the corner's deep coordinates relative 
to the parent.  
Check that the appropriate two macrocolors are displaying these coordinates 
as outgoing messages in the primary corner messaging part 
(unless doing so would display a message outside
the parent tile).  If there are matching incoming coordinates, use them to 
calculate the size of the square, and check that this size appears on one 
of the parent size lists.  Time: poly$(\log N_i)$ for calculations, plus 
$O(2^iL_i^{2/3}\log N_i)$ for looking for the matching length.
\item If this tile is located in a corner location in the parent and also has a 
corner (two different senses of the word ``corner''), and if the orientation 
is such that both arms of the corner exit the parent, repeat the previous 
step with appropriate variation to communicate with adjacent non-sibling 
children through the secondary corner messaging part. Time: 
$O(2^iL_i^{2/3}\log N_i)$.
\item If a corner in this tile has all its messages unanswered, make sure 
that corner appears on the deep corner copy part.  Time: poly$(\log L_{i+1})$.
\item If there is an incoming message in the primary corner message passing 
part of one macrocolor, and it does not match any corner this tile has, 
check the same message is outgoing in the same channel on the opposite 
side (unless doing so would cause the message to display on the outside 
of the parent tile).  If there is an incoming message in the secondary 
corner message passing part with no matching corner in this tile, it is 
ignored.  Time: $O(\log L_{i+1})$.
\end{enumerate}

\emph{Dealing with reassurances:}
\begin{enumerate}[resume]
\item If the location of this tile lets us see a bit of the size list of 
the parent parameter tape, check that the parent size reading parts are 
displaying the same thing on the left and the right (or possibly just 
displaying on one side, if this tile is located at one edge of a parent 
size on the tape).  Check that the parent size being displayed is 
consistent with what is written on the tape.  If the location of this 
tile does not let us see the parent tape, the parent size reading parts 
should be empty.  Time: poly$(\log L_{i+1})$.  
\item Check that the size lists on both $p$ and each macrocolor $c_k$ are 
sorted.  Runtime: $O(2^iL_i^{2/3}\log(L_{i+1}))$.
\item If any size from the parent size list of any macrocolor $c_k$ is 
shown as outgoing with counter 0, check that this size also appears in 
the parent size reading part. Runtime: $O(2^iL_i^{2/3}(\log L_{i+1}))$, 
most of which is travel, because there can be at most four uses of 
counter 0.  
\item For all sizes listed in any macrocolor $c_k$, or on the tape $p$, 
do the following:
	\begin{enumerate}
	\item If the size is listed in a macrocolor, verify that there is 
exactly one macrocolor on which that size is incoming, and that the 
counter annotating the size on any outgoing sides is one more than 
the incoming counter.  Exception: it is ok to have no incoming size if 
all outgoing sizes are annotated with 0.  
	\item If the size is listed on the parameter tape $p$, make sure 
it is listed in some macrocolor.
	\end{enumerate}
Time:  $2^iL_i^{2/3}$(poly$(\log L_{i+1})$), assuming a multitape machine.
\end{enumerate}

\emph{Killing tiles with forbidden sizes:}
\begin{enumerate}[resume]
\item\label{run-free} Having successfully completed all these checks 
(halting to kill the tiling if they fail), run the algorithm which 
enumerates the complement of $S$.  Whenever a size is enumerated there, 
check to see if it appears on $p$'s list.  If it does, halt to kill the 
tiling.
\end{enumerate}

By the runtime-preserving recursion theorem, let $n$ be a fixed point 
such that $RT(f(n),6,p,c_1,\dots,c_4)$, $RT(n,7,p,c_1,\dots, c_4)$, and  
the universal simulation of the same coincide to within a constant factor.
Let $RT(i)$ denote the maximum time it could take for the simulation 
to reach Step \ref{run-free} given that $p$ starts with $i_0,i$ for 
any $i_0\leq i$.
We see that $RT(i)$ is $2^iL_i^{2/3}$poly$(\log N_i)$.  Fix $i_0$ 
large enough that $RT(i) \ll N_{i-1}$ for all $i\geq i_0$.

Now define an SFT $B$ as follows.  For any symbol $s$ from the alphabet 
of $Y$, define $p_s$ as follows.  If $s$ is not an outer corner, $p_s$ 
is $i_0,i_0$.  If $s$ is an outer corner, $p_s$ is 
$i_0, i_0, \langle (0,0), r\rangle$, where $r$ indicates the 
orientation of $s$.
The alphabet of $B$ consists of those
combinations $s, c_1, c_2, c_3, c_4$, where $s$ is a symbol of $Y$,
such that $\varphi_{n,7}(p_s, c_1, c_2, c_3, c_4)$ makes it to
step \ref{run-free} without entering the kill state.
The
forbidden patterns of $B$, of course, are the forbidden
patterns of $Y$, together with the tiling restrictions declaring that
adjacent colors must match.

This completes the pseudo-code for the construction.  The main points 
of the verification are convincing oneself 
that (1) each permitted pattern of the $S$-square shift has a 
valid $B$-preimage, (2) whenever a $Y$-pattern has a valid $B$-preimage, 
each macrotile of the $B$-preimage has on its parameter tape a list 
of all sizes of square within its responsibility zone (including sufficiently 
small squares intersecting at a corner, if they are also contained in the
$Y$-pattern), and (3) for each $n$, there 
is a macrotile size large enough that no macrotile that large 
with a forbidden size less than $n$ on its parameter tape can be formed. 
The details of the verification are numerous but not difficult.

\section{Effectively closed subshifts of the distinct-square shift}\label{sec:distinct}

The previous argument needs only slight modifications in order to 
prove Theorem \ref{thm:2}, that any effectively closed subshift of 
the distinct-square shift is sofic.  We slightly increase the 
information monitored by each macrotile, to ensure that
each macrotile contains a complete 
description of what is going on in its responsibility zone as 
defined below.  Then as 
it uses its extra time to enumerate an arbitrary set of additional 
forbidden patterns, it can check to see whether each pattern appears in its 
responsibility zone, and kill the tiling if so.  In Section \ref{sec:5.2},
we describe how to do this check efficiently.

\subsection{Adjusting each macrotile to keep track of slightly more information}
First, require that the 
parameter tape of each macrotile, in addition to storing the list of 
distinct sizes appearing within its responsibility zone, must also 
annotate each size with the deep coordinates of the lower left hand 
corner of the unique square with that size.  (These deep coordinates 
may lie outside the macrotile for some squares, which is fine.) 
These annotations are passed among the children, and any child 
needing reassurance about a particular size also needs reassurance 
about the exact location of the square with that size.  Since 
the tape is only permitted to hold one set of deep coordinates 
per distinct size, this prevents repeated sizes from occurring within 
the responsibility zone of a given macrotile.  To prevent the parent 
from hallucinating additional sizes, we additionally require the 
parent to receive confirmation that each of its recorded sizes is 
actually realized in a specific child, for example by dictating that 
when a child receives distributed parental information, 
it must either use it or pass it on. 
Lastly, require each macrotile to also keep track of the location of 
any partial sides within its responsibility zone.  By a partial side, 
we mean a row or column of 1s whose corners are not within the 
responsibility zone, such as what appears on the left edge in Figure 
\ref{fig:Y}. The amount of information now needed 
on the tape at level $i$ is essentially unchanged,
still $O(L_i^{2/3}\log L_{i})$.

Second, the responsibility zones must be expanded so that any  
finite region eventually lies within the responsibility zone of a 
single macrotile.  Require each child tile that is on the boundary 
of its parent macrotile to share its parameter tape with its neighbor(s) 
of a different parent via a new part of the macrocolor, and to require 
reassurance from its own parent about any information it receives 
in return.  Children at the corners of four distinct parent tiles 
cooperate to share all their information with each other and 
require additional reassurance about all of it.  In this way, 
the responsibility 
zone of a given macrotile extends beyond its area by at least the 
width of one of its child tiles in every direction, but not more 
than double the width of one of its child tiles.  The result: even in an 
exceptional tiling, every finite region is eventually within 
the responsibility zone of a single macrotile.  The  
amount of information needed on the parameter tapes and macrocolors is 
increased by at most a constant factor.
Note that a macrotile is now tracking deep partial corner coordinates
for corners located in its entire responsibility zone, including 
possibly corners that are outside that macrotile, and same holds for 
coordinates of partial sides.  Again, the method 
of encoding the deep coordinates can be extended to accommodate this. 
The macrotile's tape now contains a complete description of what is 
in its responsibility zone.

\subsection{Recognizing when a given forbidden pattern appears}\label{sec:5.2}
Without loss of generality, we may assume that the enumerated forbidden 
patterns all have square domain.  We may also assume that each 
forbidden pattern has at least one 0 and at least one 1, since if a 
solid square of 0s or 1s is forbidden, there is one with minimal size, 
and it can be forbidden manually.  Since the tileset already forbids 
patterns that do not appear in the $\Nat$-square shift, we may assume 
all forbidden patterns consist of squares and partial squares of 1s. 
For each forbidden pattern, there are just countably many ways to 
complete its partial squares, so we can assume each forbidden pattern is 
enumerated infinitely many times, each time annotated with a particular 
choice of size and location (relative to the domain of 
the forbidden pattern) of each of its partial squares.  To account for 
squares of infinite size, ``Large'' is a 
permitted size, and ``Far'' is permitted coordinate information for 
``Large'' squares.  Observe that if a pattern is forbidden, it is 
correct to forbid all possible ways of completing it, and conversely,
if all ways of completing it are forbidden, the pattern itself cannot 
appear.

It is efficient to recognize whether an annotated pattern appears in 
the responsibility zone.   
Assuming first that the annotated pattern contains at least one 
finite square or partial corner, observe that if that component is 
located in the parameter list, its location in the macrotile 
uniquely specifies a single candidate for the
location of the forbidden pattern relative the macrotile. 
So make one pass through the parameter tape to look for such a component.
If the location-fixing component is not found, or if its position implies the 
location of the forbidden pattern extends outside the
responsibility zone, or if the annotated pattern references finite sizes 
that are larger than what the macrotile tracks, the annotated pattern 
is safely ignored (a larger macrotile will consider it).   
Otherwise, make a second pass through the parameter 
list, this time asking each component whether it intersects the 
candidate region, and if so, whether the 
annotated forbidden pattern thinks it should be there.  
At the end of the second 
pass, if no extraneous components have intruded on the candidate region, 
and if all components of the annotated pattern have been checked off, 
then the forbidden pattern appears and the macrotile is forbidden. 
For a macrotile at level $i$, these checks take time 
$rL_i^{2/3} \poly(\log L_i)$, where $r$ is the 
size of the annotated pattern. 

Suppose now that the annotated pattern contains no square of finite 
size, nor a partial corner.  The only patterns that can do 
this have one or two ``Large'' partial sides, with all their coordinates 
``Far'', and no other squares.  We assume such patterns are split 
into further annotated versions, each specifying the relative 
location of some additional finite square or partial corner, plus 
one annotation asserting that any such square or corner is ``Far''.  
The former case is then handled as described above, and in the latter 
case the macrotile considers the annotated pattern found if it has 
matching partial side(s) and no finite sizes or partial corners on its tape.

The second pass ensures that in every case in which a macrotile is killed, 
it was 
appropriate to do so.  Conversely, if a configuration contains a given 
forbidden pattern, it contains it in a context which is covered by one 
of the annotations, so that configuration will be killed if that 
annotation is ever considered by a macrotile which contains 
both the forbidden pattern and enough of the context.  
And each annotation will be eventually considered, 
because the time to get through the $k$th annotation in a macrotile at 
level $i$ is 
$c_k + (\sum_{j\leq k} r_j) L_i^{2/3}\poly(\log L_i)$, where $c_k$ is 
the computing time to enumerate the 
first $k$ annotations, and $r_j$ is the size of the $j$th annotation.

This completes the sketch of modifications needed to prove Theorem \ref{thm:2}.

\section{An SFT with the same entropy as its factor}\label{sec:entropy}

For any $\Z^2$ subshift $A$, let $A\uhr n$ denote the set of patterns on 
an $n\times n$ rectangular domain $D \subseteq \Z^2$ which appear in 
some element of $A$.  The \emph{entropy} of $A$ is defined as
$$h(A) = \lim_{n\rightarrow\infty} \frac{\log |A\uhr n|}{n^2}.$$

The SFTs described in the previous sections had more entropy than
the shifts they factored onto because in each macrotile, the
children had many choices about how to route the information of
parental sizes.  Also, nothing prevented a macrotile from
believing it had sizes and/or partial corners which it did not 
have.  We show how to
modify the construction to force the tiles to claim only the
sizes they must, and force the internal communication of the
children to follow a specific and unique plaid protocol.  In this
modification, there are no counters; hallucinatory parental
reassurances are made impossible by the rigid nature of the protocol.

Let $A$ be an $S$-square shift or an effectively closed subshift of 
the distinct square shift.
Let $B'$ denote the modified SFT which we will describe below,
with $\Lambda'$ its alphabet.  
Let $F_n \subseteq \Z^2$ be any $n\times n$ square. Let $\partial F_n$
be its boundary.  Let 
$M_i$ denote the side lengths of the macrotiles at
level $i$. We will arrange the following unique extension property:
for each $M_i$, and each pattern 
$\sigma:F_{M_i} \rightarrow \{0,1\}$ that appears in an element of $A$, 
and each pattern 
$\tau:\partial F_{M_i} \rightarrow \Lambda'$ that is consistent 
with $\sigma$, there is at most one way to extend $\tau$ to 
all of $F_{M_i}$ so that the result is a single level $i$
macrotile consistent 
with $\sigma$.  This is enough to ensure that $h(B') = h(A)$, because
$$|B'\uhr M_i| \leq  M_{i-1}^2 |\Lambda'|^{4M_iM_{i-1} + 4N_{i-1}^2M_{i-1}}
  |A\uhr M_i|,$$
where the factor of $M_{i-1}^2$ accounts for all the possibilities 
of how the $(i-1)$th layer of macrotiles can be aligned, and the 
$|\Lambda'|^{4M_iM_{i-1} + 4N_{i-1}^2M_{i-1}}$ factor accounts for all 
the possible ways to fill in symbols of $\Lambda'$ completely 
on partial layer-$(i-1)$ tiles at the edges of the region and 
on the boundary of layer-$(i-1)$ tiles fully contained within the 
region.  Taking logs, dividing by $M_i^2$, and taking the limit as 
$i\rightarrow\infty$ completes the entropy calculation.  Now we 
show how to satisfy the unique extension property.

The partial corner lists are easy to make unique:  we can restrict the partial 
corner part of the parameter tape
to appear in lexicographic order; also, when a child sees its 
deep corner copy 
part include a corner at that child's location, that child must verify 
that it truly has an unrequited corner there.  

To pin down the protocol of parent size list communication among children, 
fix in advance a division of the parent macrotile into $O(2^i)$ layers
of subgrids large enough to accommodate the worst case of the plaid described in
Section \ref{sec:plaid}.  In this way each child tile can know its
location in each subgrid to which it belongs.  The 
macrocolors, instead of containing one long list of parent
sizes, will now contain $O(2^i)$ sorted lists, one associated to
each layer of plaid.  The child tiles will perform checks on these 
lists in order to force a plaid protocol similar to that of 
Section \ref{sec:plaid}, with a few modifications to make it 
possible to enforce that a size is used in a particular subgrid 
if and only if some child in that subgrid needs it.

\subsection{Forcing a plaid pattern at each layer}

To enforce a local stripes pattern, children verify that within each
subgrid to which they belong, the sizes associated to that
region are propagated straight horizontally and vertically within the subgrid,
and not propagated outside of it.  
To ensure that the same stripes are repeated horizontally and vertically 
within a given subgrid at layer $j$, the children who lie on the main 
diagonals of its subgrids at layer $j+1$ can check that their horizontal 
and vertical $j$th lists are in exact agreement.  
To enforce the order of sizes in the plaid, the children just below all 
the main diagonals of the layer $j+1$ subgrids verify that their layer $j$ 
horizontal sizes must all be larger than their layer $j$ vertical sizes.
Since the number 
of sizes used in the given layer $j$ subgrid may be much less than 
than $L_i^{2/3}$ times the width of the $(j+1)$st layer, 
we allow a filler symbol in the size lists.  By insisting that all 
lists be full, and that layer $j$ sizes never follow filler symbols 
within layer $j+1$ subgrids, we obtain well-formed plaids in 
each subgrid, with each size occupying exactly one row and one column 
in each subsubgrid.  
The different possibilities now correspond only to 
different choices of what sizes should go in the plaid in each subgrid.

\subsection{Making exactly the right sizes show up in each subgrid}

We use the ordering of the plaid to ensure that the sizes 
that are listed in any
subgrid of layer $j+1$ are a subset of those in its parent subgrid 
in layer $j$. To check this, the children
compare their $(j+1)$st list to their 
$j$th list, and if some $n$ on the $(j+1)$st list lies between the
least and greatest elements of the $j$th list (or between $0$ and its 
greatest element, for children in the upper left corner of a layer $j+1$ 
subgrid), then $n$ needs to be
on the $j$th list or the child kills the tiling, since $n$
cannot legally appear in the parent subgrid anywhere else.  
This leaves an edge case where a size bigger than the largest 
size of one layer $j$ list and smaller than the smallest size of 
the next could hallucinate itself into the $(j+1)$st layer.
To fix this, we modify the plaid convention to insist
that the size lists in neighboring columns and rows within each 
subgrid are not disjoint,
but overlap by exactly one entry, which is the least element of one
list and the greatest element of the other.
This ensures that
no new sizes are added when passing from layer to layer.

To ensure a size appears in a subgrid only if it needs to be there,
each size that appears in a given subgrid must be annotated with
the coordinates of the lexicographically least child in that subgrid
who needs that size.  So for example, in the topmost layer of plaid,
the relevant region is the entire parent macrotile, and so each size
represented in the top layer of plaid is annotated with the least
child in the entire macrotile who needs that size.  

First the children need to enforce that if a particular size $n$
appears in one layer of plaid with annotation $a$, this size-annotation
pair must be propagated into the subgrid in which $a$ is located in the 
next layer.  To do this, for each $\langle n, a\rangle$ on a given 
child's $j$th list, if that child is located in the same $(j+1)$st-layer 
subgrid as $a$, and if $n$ is between the least and greatest elements 
of the child's $(j+1)$st list, then $\langle n, a\rangle$ must appear 
on the $(j+1)$st list.  If that child is not located in the same $(j+1)$st 
layer subgrid as $a$, but $n$ still appears on its $(j+1)$st list, 
it must appear as $\langle n, b\rangle$ for some $b$ lexicographically 
greater than $a$ which is in the same $(j+1)$st subgrid as this child.
(If everything in the child's $(j+1)$st subgrid is lexicographically 
smaller than $a$, this is a ban on $n$ propagating here.)

Finally, we need to check that all claimed annotations are accurate. 
If the child with location $a$ has $\langle n,a\rangle$ on its last list,
it must verify that it actually needs
$n$ or kill the tiling.  And each child lexicographically less than $a$
which sees $\langle n,a\rangle$ on its last list must verify that it 
does not need $n$, or kill the
tiling.

As before, there is a algorithm for doing all these checks that is 
linear in the number of sizes.

\subsection{Connecting the top layer to the parent tape}
The restrictions ensuring a unique plaid have also ensured that the
top layer of plaid contains exactly the sizes needed by the children.
To make sure the contents of the parent tape correspond exactly to the 
contents of the top layer of plaid, a new part of the macrocolor 
is employed, which holds a size-annotation pair.
The first child of each parent size reading group must use this channel 
to pass the size it has read straight down, along with an annotation 
indicating exactly at what location the message will find that size in 
the top layer of plaid.  Any child receiving such a message must pass 
it straight onward, or verify it, if that child is the one named in 
the annotation.  This ensures everything on the parent tape makes it into 
the plaid.  To ensure everything on the plaid came from the parent tape, 
declare that every size in the top layer of plaid must carry an additional 
annotation, namely the same location where the message from the parent tape 
will meet the plaid; the child at that location can check that the 
message from the tape really arrives.

\subsection{Unique extension property}

To see that the unique extension property holds, one can argue inductively 
that the following stronger property holds at all levels: 
for any valid $A$-pattern $\sigma$ on 
$F_{M_i}$ and any consistent $B'$-pattern $\tau$ on its boundary, not 
only is there at most one consistent extension of $\tau$ to all of $F_{M_i}$
that produces a single level $i$ macrotile, 
but also that when such a macrotile exists, its parameter tape must contain 
exactly the sizes that appear in the macrotile's responsibility zone, 
as well as exactly the deep coordinates of corners of partial squares which 
extend outside the responsibility zone.  Since the responsibility zone 
of the macrotile extends outside the $A$-pattern, this may at first glance 
appear not well-defined.  However, if $\tau$ can be extended to form a 
single macrotile on $\sigma$, then the information $\tau$ displays outside 
the macrotile can be interpreted as message-passing which uniquely 
determines what is going on in the responsibility zone, in the sense 
that there is only one extension of $\sigma$ to the entire responsibility 
zone which allows $\tau$ to also be extended there.

Assuming the stronger property holds at level $i$, its proof sketch for level 
$i+1$ goes as follows.  Given $\sigma$, divide it into $N_{i-1}^2$ child tiles 
of side length $M_{i-1}$.  For each child tile, its coordinates in the 
parent are determined.
Using bits of $\sigma$ for interior edges and 
messages from $\tau$ for exterior edges, determine what is going on in 
each child tile's responsibility zone.  Assuming each child tile will be 
completed, its parameter tape is thus determined.  The corner messages it 
sends out are uniquely determined by its parameter tape, and those it 
receives are uniquely determined by its adjacent sibling's parameter tape 
or by $\tau$.  The level $i$ tile's deep corner coordinates are therefore
uniquely determined as the only ones that all the child tiles will accept, 
namely the ``correct'' ones.  Now that the corner calculations have been 
established, it is also pinned down which children will ask for 
reassurances about what sizes.  The unique plaid protocol uniquely 
determines what will go in the parent size list part of each child's 
macrocolor in order to provide those reassurances, as well as guarantees 
that the parent tape size list part contains exactly the sizes requested 
by the children.  The parent parameter tape is now completely fixed, 
when you count the fact that the algorithm running on it will kill any 
tiling in which it is not appropriately ordered.  This also fixes the 
parent size reading parts of each child tile.  Since the inputs to 
the color tapes of the parent come from $\tau$ and the computation 
is deterministic, the wire and machine parts of all the children 
are also uniquely determined.  So every child's macrocolors are uniquely 
determined.  This completely determines what symbols of $\Lambda'$ 
can go on the boundary of each child tile, because the only messages that 
leave the child tile (at any level of its grandchildren) are secondary 
corner passing messages or responsibility zone information sharing 
messages, the content of which is uniquely determined from $\sigma$ and 
$\tau$.  By induction, the rest of the symbols of $\Lambda'$ are 
uniquely determined.  If any stage of this process was impossible to 
carry out, then there is no extension of $\tau$ to produce all of $\sigma$. 
So the number of possible $\tau$-extensions is either 0 or 1.

\bibliographystyle{plain}
\bibliography{ssquare_bib}

\end{document}